\newcommand{\triple}{{\vert\kern-0.25ex\vert\kern-0.25ex\vert}}
\newtheorem{definition}{Definition}[section]
\newtheorem{theorem}[definition]{Theorem}
\newtheorem{lemma}[definition]{Lemma}
\newtheorem{assumption}[definition]{Assumption}
\newtheorem{example}[definition]{Example}
\begin{document}
\title{\bf  Convergence in probability of numerical solutions of a highly nonlinear delayed stochastic interest rate model}
\author
{{\bf Emmanuel Coffie \footnote{Corresponding author, Email: emmanuel.coffie@liverpool.ac.uk}}
\\[0.2cm]
Institute for Financial and Actuarial Mathematics, 
 \\[0.2cm]
University of Liverpool, Liverpool L69 7ZL, UK.}
\date{}
\maketitle

\begin{abstract}
We study a delayed stochastic interest rate model with superlinearly growing coefficients and develop novel analytical tools to investigate the properties of both the true solution and its truncated Euler–Maruyama (TEM) approximation. In particular, we prove that the true solution converges in probability to the truncated EM solution as the step size approaches zero. Furthermore, we illustrate the theoretical findings through numerical experiments and validate the convergence results using an efficient Monte Carlo simulation framework for the valuation of relevant financial quantities.

\medskip \noindent
{\small\bf Keywords:} Stochastic interest rate model, delay, truncated EM method, Monte Carlo method, bond, option contract.

\medskip \noindent
{\small\bf Mathematics Subject Classification:} 65C05, 65C30, 91G30, 91G60
\end{abstract}

\section{Introduction}
Stochastic modelling of interest rates plays a fundamental role in the calibration and valuation of financial derivatives, particularly option contracts. A wide range of stochastic interest rate models has been developed in the literature to describe the evolution of interest rates over time. Among the most prominent is the Cox–Ingersoll–Ross (CIR) model, introduced by Cox, Ingersoll, and Ross in \cite{CIR}. The CIR model is governed by the stochastic differential equation (SDE)
\begin{align}\label{cir1}
dx(t) = \alpha(\mu - x(t)),dt + \sigma \sqrt{x(t)},dB(t),
\end{align}
Here $x=(x(t),t\geq0)$ denotes the interest rate with initial value $x(0)=x_0$, $\alpha,\mu,\sigma>0$ and $B=(B(t),t\ge 0)$ is a scalar Brownian motion. The CIR model is mean-reverting and due to its square root diffusion factor, it can also avoid possible negative rates. 
\par
We observe in SDE \eqref{cir1} that the volatility term $\sigma$ is assumed constant. However, as supported by empirical studies, volatility is not constant but exhibits empirical features widely known as volatility skews and smiles that are prevalent in option markets (see, e.g., \cite{Heston, Gatheral}). To provide an adequate description of the evolution of volatility skews and smiles, one of the main schools of thought proposed based on empirical findings is to model volatility as a delay variable. For instance, Arriojas et al. in \cite{Arriojas} proposed the delay Black-Scholes model where the drift depends on a delay variable and, also, the volatility term is a function of a delay variable. This model is described by the dynamics
\begin{equation}
dx(t)=\mu x(t-a)x(t)dt+g(x(t-b))x(t)dB(t)
\end{equation}
on $t\in[0,T]$ with initial value $\xi(u)\in [-\tau,0]$, where $\xi:\Omega\rightarrow C([-\tau,0];\mathbb{R})$, $\mu,a,b>0$, $\tau=\max(a,b)$, $x(t-a)$ and $x(t-b)$ denote delays in $x(t)$, $g:\mathbb{R}\rightarrow \mathbb{R}$ is a continuous function and $B=(B(t),t\ge 0)$ is a scalar Brownian motion. The authors showed that the delay Black-Scholes model maintains the no-arbitrage property and the completeness of the market with the correct volatility skews and smiles. Furthermore, Mao and Sabanis in \cite{sabanis} introduced the delay geometric Brownian motion described by the stochastic differential delay equation (SDDE)
\begin{equation} \label{cir2}
dx(t)=rx(t)dt+v(x(t-\tau))x(t)dB(t)
\end{equation}
on $t\ge 0$ with the initial value $\xi(u)$ on $u\in[-\tau,0]$, where $\tau,r>0$, $\xi:\Omega\rightarrow C([-\tau,0];\mathbb{R})$, $B=(B(t),t\ge 0)$ is a scalar Brownian motion, the volatility function $v$ depends on $x(t-\tau)$ and $x(t-\tau)$ denotes delay in $x(t)$. The authors studied the quantitative properties of this model where  $v$ is merely local Lipschitz continuous and bounded. The authors provided numerical evidence to justify that the system of type \eqref{cir2} is a rich alternative model for an asset price process in a complete market characterised by volatility skews and smiles. The reader may also consult (e.g., \cite{Fuke1,Kind}) for financial models with features of past dependency. 
\par
In recent years, more empirical studies have shown that the most successful continuous-time models for interest rates are those models that allow the volatility of interest changes to be highly sensitive to the level of the rates (see, e.g., \cite{Chan,Nowman}). This motivated Wu et al. in  \cite{Fuke2} to extend SDE \eqref{cir1} to include super-linear diffusion term described by the dynamics
\begin{align}\label{cir3}
dx(t)=\alpha(\mu-x(t))dt+\sigma x(t)^{\theta}dB(t)
\end{align}
on $t\ge 0$, where $\theta>1$.  One notable unique feature of SDE \eqref{cir3} is that the solution $x(t)$ is a highly sensitive mean-reverting process. The authors established the convergence in probability of the EM  solutions to the true solution. They justified the convergence result within Monte Carlo simulations to value the expected payoff of a bond and a barrier option. To further capture volatility skews and smiles and high nonlinearities in the rate, the authors in \cite{emma} extended the generalised Ait-Sahalia interest rate model to include a volatility as a function of a delay variable described by  SDDE
\begin{equation}\label{Ait}
  dx(t)=(\alpha_{-1}x(t)^{-1}-\alpha_{0}+\alpha_{1}x(t)-\alpha_{2}x(t)^{\gamma})dt+v(x(t-\tau))x(t)^{\theta}dB(t)
\end{equation}
on $t\geq 0$ with the initial value $\xi(u)$ on $u\in[-\tau,0]$, where $\alpha_{-1},\alpha_{0},\alpha_{1},\alpha_{2}, \tau>0$, $\xi:\Omega\rightarrow C([-\tau,0];\mathbb{R})$, $\gamma,\theta>1$ and $B=(B(t),t\ge 0)$ is a scalar Brownian motion, $v$ is a function of $x(t-\tau)$ and $x(t-\tau)$ denotes delay in $x(t)$. Under a monotone condition and the assumption that $v$ is local Lipschitz continuous and bounded, the authors proved the strong convergence of the TEM solutions to the true solution of SDDE \eqref{Ait} and justified that the strong convergence result can be applied to value a bond and a barrier option.
\par
Therefore, in order to account for high nonlinearities in the rates as well as the evolution of volatility skews and smiles, we consider it necessary to reformulate SDE \eqref{cir3} as SDDE with super-linearly growing drift and diffusion coefficients described by the dynamics
\begin{equation}\label{cir4}
dx(t)=\alpha(\mu-x(t)^{\gamma})dt+\sigma x(t-\tau)^{r}x(t)^{\theta}dB(t),
\end{equation}
on $t\ge 0$ with the initial value $\xi(u)$ on $u\in[-\tau,0]$, where $\tau>0$, $\xi:\Omega\rightarrow C([-\tau,0];\mathbb{R})$, $\theta, r>0$ and $\gamma>1$. We observe that both the drift factor $x^{\gamma}$  and the diffusion factor $x^{\theta}y^r$ of SDDE \eqref{cir4} are growing super-linearly and thus violate the global Lipschitz and linear growth conditions. This is further complicated by the presence of the unbounded delay variable $y$. Therefore, it can be very challenging to obtain the solution of SDDE \eqref{cir4} by an analytical closed-form formula. To the best of our knowledge, there exist no relevant literature for numerical treatment of SDDE \eqref{cir4} either in the strong sense or weak sense. In this case, we recognise the need to examine the feasibility of the system of SDDE \eqref{cir4} from  a viewpoint of financial applications. This motivates the need for an efficient numerical method with fast computational performance to estimate the solution. However, in most real-world applications, the explicit EM method is preferred to the implicit type due to its simple algebraic structure, low computational cost and acceptable convergence rate. It is well-known in \cite{weakEM} that the explicit EM scheme diverges in the strong mean-square sense at finite point for SDEs with super-linearly growing coefficient terms. In this work, we aim to construct a variant of the truncated EM method developed in \cite{mao3} to estimate the true solution of SDDE \eqref{cir4} and show that the TEM solutions converge to the true solution in probability when the step size is sufficiently small. The remainder of the paper is organised as follows: We explore mathematical notations in Section \ref{sect2}. In Section \ref{sect3}, we study properties of the true solution of SDDE \eqref{cir4}. We construct the truncated EM techniques to approximate SDDE \eqref{cir4} and study properties of the TEM solutions in Section \ref{sect4}. In Section \ref{sect5},  we show that the TEM solutions converge to the true solution of SDDE \eqref{cir4} in probability. We also provide illustrative numerical examples to support the convergence result and justify the result via efficient use of the Monte Carlo method to value a bond and a lookback put option in this section.
\section{ Mathematical preliminaries}\label{sect2}
Throughout this paper, unless specified otherwise, we employ the following notation. Let $\{ \Omega,\mathcal{F},\mathbb{P}\}$ be a complete probability space with filtration $\{ \mathcal{F}_t\}_{t\geq 0}$ satisfying the usual conditions (i.e., it is increasing and right continuous while $\mathcal{F}_0$ contains all $\mathbb{P}$ null sets), and let $\mathbb{E}$ denote the expectation corresponding to $\mathbb{P}$. Let $B=(B(t), t\geq 0)$, be a scalar Brownian motion defined on the above probability space. If $a,b$ are real numbers, then $a\vee b$ denotes the maximum of $a$ and $b$, and $a\wedge b$ denotes the minimum of $a$ and $b$. Let $\mathbb{R}=(-\infty,\infty)$ and $\mathbb{R}_+=(0,\infty)$. If $x\in \mathbb{R}$, then $\vert x\vert$ is the Euclidean norm. For $\tau >0$, let $C([-\tau,0];\mathbb{R}_+)$ denote the space of all continuous functions $\xi: [-\tau,0]\rightarrow \mathbb{R}_+$ with the norm $\|\xi\|=\sup_{-\tau\leq t\leq 0}\xi(t)$. For an empty set $\emptyset$, we set $\text{inf }\emptyset=\infty$. For a set $A$, we denote its indicator function by $1_A$. Let the following scalar dynamics
\begin{equation}\label{eq2:1}
dx(t)=f(x(t))dt+g(x(t),x(t-\tau))dB(t)
\end{equation}
with initial value $x(u)=\xi(u)\in C([-\tau,0];\mathbb{R}_+)$ denote equation of SDDE \eqref{cir4} such that $f(x)=\alpha(\mu-x^{\gamma})$ and $g(x,y)=\sigma x^{\theta}y^r$, for all $x,y\in \mathbb{R}_+$. Let $C^{2,1}(\mathbb{R}\times \mathbb{R}_+;\mathbb{R})$ be the family of all real-valued functions $V(x,t)$ defined on $\mathbb{R}\times \mathbb{R}_+$ such that $V(x,t)$ is twice continuously differentiable in $x$ and once in $t$.  For each $V\in C^{2,1}(\mathbb{R}\times \mathbb{R}_+;\mathbb{R})$, define the  operator $LV:\mathbb{R}\times \mathbb{R}\times \mathbb{R}_+\rightarrow \mathbb{R}$ by
\begin{equation}\label{eq2:2}
 LV(x,y,t)=V_t(x,t)+V_x(x,t)f(x)+\frac{1}{2}V_{xx}(x,t)g(x,y)^2
\end{equation}
for SDDE \eqref{cir4} associated with the function $V$, where $V_t(x,t)$ and $V_x(x,t)$  are first-order partial derivatives with respect to $t$ and $x$ respectively, and $V_{xx}(x,t)$, a second-order partial derivative with respect to $x$. With the operator $LV$ defined, then the It\^{o} formula yields
\begin{align}\label{eq2:3}
dV(x(t),t)=LV(x(t),x(t-\tau),t)dt+V_x(x(t),t)g(x(t),x(t-\tau))dB(t)
\end{align}
almost surely. We should emphasise that $LV$ is defined on $\mathbb{R}\times \mathbb{R}\times \mathbb{R}_+$ while $V$ is defined on $\mathbb{R}\times \mathbb{R}_+$. Moreover, we impose the following standing hypotheses.
\begin{assumption} \label{sec2:assump:1} 
The parameters of \textup{SDDE} \eqref{cir4} satisfy 
\begin{equation}
1+ \gamma > 2(r+\theta),
\end{equation}
where  $\theta, r>0$ and $\gamma>1$.
\end{assumption}
\begin{assumption}\label{sec2:assump:2} 
There exist constants $D>0$ and $\ell\in (0,1]$ such that for all $ -\tau\leq u \leq t \leq 0$, the initial function $\xi$ satisfies
\begin{equation}\label{eq:15}
  |\xi(t)-\xi(u)|\le D|t-u|^{\ell}.
\end{equation}
\end{assumption}

\noindent We introduce the following lemma for later use.

\begin{lemma}\label{lemma1}
Let Assumption \ref{sec2:assump:1} hold. For any $R>0$, there exists a constant $G_R>0$ such that the coefficients of the \textup{SDDE} \eqref{cir4} satisfy
\begin{align}
| f(x)-f(\bar{x}) | + | g(x,y)-g(\bar{x},\bar{y}) |
\le G_R \big( |x-\bar{x}| + |y-\bar{y}| \big)
\end{align}
for all $x,y,\bar{x},\bar{y}\in \mathbb{R}$ with 
$|x|\vee |\bar{x}|\vee |y|\vee |\bar{y}| \le R$.
\end{lemma}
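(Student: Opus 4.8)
The plan is to treat this as a routine local Lipschitz estimate, handling the drift coefficient $f$ and the diffusion coefficient $g$ separately and then combining the two bounds into a single constant $G_R$. The workhorse throughout is the elementary fact that a power map is locally Lipschitz: for an exponent $\beta\ge 1$ and non-negative reals $u,v$,
\begin{equation*}
|u^\beta - v^\beta| \le \beta\,(u\vee v)^{\beta-1}\,|u-v|,
\end{equation*}
which follows from the mean value theorem (or from factoring $u^\beta-v^\beta$). On the region $|x|\vee|\bar x|\vee|y|\vee|\bar y|\le R$ every factor of the form $(u\vee v)^{\beta-1}$ is bounded by $R^{\beta-1}$, so each power difference becomes a genuine Lipschitz bound with an $R$-dependent constant. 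Since the interest rate process is $\mathbb{R}_+$-valued, I would work with non-negative arguments throughout, where these powers are well defined.

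For the drift I would write $f(x)-f(\bar x)=-\alpha(x^\gamma-\bar x^\gamma)$ and apply the inequality above with $\beta=\gamma>1$, giving $|f(x)-f(\bar x)|\le \alpha\gamma R^{\gamma-1}|x-\bar x|$. This part is immediate precisely because $\gamma>1$ sits in the regime where the power estimate holds.

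For the diffusion the idea is to telescope the product so that only one argument changes at a time,
\begin{equation*}
g(x,y)-g(\bar x,\bar y)=\sigma\big[(x^\theta-\bar x^\theta)y^r+\bar x^\theta(y^r-\bar y^r)\big].
\end{equation*}
On the bounded region the frozen factors satisfy $|y^r|\le R^r$ and $|\bar x^\theta|\le R^\theta$, while the two remaining differences are controlled by the power inequality applied with $\beta=\theta$ and $\beta=r$. Collecting terms yields $|g(x,y)-g(\bar x,\bar y)|\le G_R'\,(|x-\bar x|+|y-\bar y|)$ for a suitable $G_R'$ depending on $\sigma,\theta,r,R$, and one finishes by taking $G_R$ to be the maximum of $G_R'$ and the drift constant $\alpha\gamma R^{\gamma-1}$.

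The main obstacle is the behaviour of the diffusion exponents $\theta,r$, which Assumption \ref{sec2:assump:1} only constrains to be positive. When $\theta$ or $r$ lies in $(0,1)$ the map $u\mapsto u^\beta$ is merely H\"older, not Lipschitz, near the origin: its derivative $\beta u^{\beta-1}$ blows up as $u\to 0^+$, so the factor $R^{\beta-1}$ is no longer available and the naive estimate breaks down on any region touching $0$. Handling this delicately requires either restricting attention to exponents $\ge 1$ or working on compact subsets of $\mathbb{R}_+$ bounded away from zero; the latter is consistent with the strict positivity and boundedness of the solution established in the subsequent sections. For exponents $\ge 1$ the argument above goes through verbatim, and I expect this to be the only point where genuine care is needed.
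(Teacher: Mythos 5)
Your proof is correct on its stated domain and takes a genuinely different route from the paper's. The paper first assumes an ordering of the points, then applies the Young inequality $x^\theta y^r \le x^{2\theta} + y^{2r}$ to convert the mixed difference $x^\theta y^r - \bar x^\theta \bar y^r$ into $(x^{2\theta}-\bar x^{2\theta}) + (y^{2r}-\bar y^{2r})$ --- a step that is not valid, since bounding the subtracted term $\bar x^\theta\bar y^r$ from above goes the wrong way --- and then applies the mean value theorem to each power difference, absorbing factors such as $|x|^{\gamma-1}$, $|x|^{2\theta-1}$, $|y|^{2r-1}$ into the constant $G_R$; Assumption \ref{sec2:assump:1} is invoked through a remark about exponents whose role in a purely local estimate is unclear. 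Your telescoping decomposition $g(x,y)-g(\bar x,\bar y)=\sigma[(x^\theta-\bar x^\theta)y^r+\bar x^\theta(y^r-\bar y^r)]$, with the frozen factors bounded by powers of $R$ and the power-difference inequality applied to each bracket, is the standard correct argument, and it rightly makes no use of Assumption \ref{sec2:assump:1}: a local Lipschitz bound on a bounded set has nothing to do with the relative size of the exponents.

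The obstacle you flag is real, and it is a defect of the lemma as stated rather than of your argument. Take $\theta=r=1/2$ and $\gamma=2$ (permitted by Assumption \ref{sec2:assump:1}), and choose $y=\bar y=R$, $\bar x=0$: then $|g(x,y)-g(\bar x,\bar y)|=\sigma\sqrt{R}\,\sqrt{x}$, which no constant $G_R$ can dominate by $G_R x$ as $x\to 0^+$; so the claimed inequality fails near the origin whenever $\theta$ or $r$ lies in $(0,1)$. The paper's own proof conceals the same failure: its mean-value factors $|x|^{2\theta-1}$ and $|y|^{2r-1}$ are unbounded near zero when $2\theta<1$ or $2r<1$, yet are treated as if bounded in terms of $R$ alone. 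Your second repair --- working on compact subsets of $\mathbb{R}_+$ bounded away from zero --- is exactly what the paper actually needs: in every later application (Lemma \ref{sub3} and Lemma \ref{eq:l8}) the lemma is invoked only for arguments confined by a stopping time to $[1/k,k]$, where your telescoping argument yields the bound with a constant $G_k$ depending also on the lower endpoint $1/k$.
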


\begin{proof}
We only prove the result for $x \le \bar{x}$ and $y \le \bar{y}$; the general case follows similarly. First observe that
\begin{align*}
&(f(x)-f(\bar{x})) +(g(x,y)-g(\bar{x},\bar{y})) \\
&= \alpha(\mu - x^{\gamma}) - \alpha(\mu - \bar{x}^{\gamma})
  + \sigma x^{\theta}y^r - \sigma \bar{x}^{\theta}\bar{y}^r \\
&= -\alpha (x^{\gamma} - \bar{x}^{\gamma})
   + \sigma (x^{\theta}y^r - \bar{x}^{\theta}\bar{y}^r).
\end{align*}
Using Young inequality, we have
\begin{align*}
x^{\theta}y^r \le x^{2\theta} + y^{2r}, 
\quad
\bar{x}^{\theta}\bar{y}^r \le \bar{x}^{2\theta} + \bar{y}^{2r}.
\end{align*}
Hence,
\begin{align*}
x^{\theta}y^r - \bar{x}^{\theta}\bar{y}^r
\le (x^{2\theta} - \bar{x}^{2\theta}) + (y^{2r} - \bar{y}^{2r}).
\end{align*}
It follows that
\begin{align*}
&(f(x)-f(\bar{x}))+(g(x,y)-g(\bar{x},\bar{y})) \\
&\le -\alpha (x^{\gamma} - \bar{x}^{\gamma})
+ \sigma (x^{2\theta} - \bar{x}^{2\theta})
+ \sigma (y^{2r} - \bar{y}^{2r}).
\end{align*}
Applying the mean value theorem to each term yields
\begin{align*}
|x^{\gamma} - \bar{x}^{\gamma}|
&\le \gamma \big(|x|^{\gamma-1} + |\bar{x}|^{\gamma-1}\big) |x-\bar{x}|, \\
|x^{2\theta} - \bar{x}^{2\theta}|
&\le 2\theta \big(|x|^{2\theta-1} + |\bar{x}|^{2\theta-1}\big) |x-\bar{x}|, \\
|y^{2r} - \bar{y}^{2r}|
&\le 2r \big(|y|^{2r-1} + |\bar{y}|^{2r-1}\big) |y-\bar{y}|.
\end{align*}
Therefore,
\begin{align*}
&| f(x)-f(\bar{x}) | + | g(x,y)-g(\bar{x},\bar{y}) | \\
&\le \Big[\alpha \gamma (|x|^{\gamma-1} + |\bar{x}|^{\gamma-1})
+ 2\theta\sigma (|x|^{2\theta-1} + |\bar{x}|^{2\theta-1}) \Big] |x-\bar{x}| \\
&\quad + 2r\sigma (|y|^{2r-1} + |\bar{y}|^{2r-1}) |y-\bar{y}|.
\end{align*}
Since $|x|,|\bar{x}|,|y|,|\bar{y}| \le R$, all coefficients are bounded. Hence there exists a constant $G_R>0$ such that
\begin{align*}
| f(x)-f(\bar{x}) | + | g(x,y)-g(\bar{x},\bar{y}) |
\le G_R (|x-\bar{x}| + |y-\bar{y}|).
\end{align*}
This completes the proof.
\end{proof}
\section{Properties of true solution}\label{sect3}
In this section, we study properties of the true solution to SDDE \eqref{cir4}. Since SDDE \eqref{cir4} is a financial model, it is a natural requirement to show that the solution is always positive.
\subsection{Existence of positive solution}
The following theorem shows that the solution of SDDE \eqref{cir4} is positive almost surely.
\begin{theorem}
Let Assumption \ref{sec2:assump:1} hold. Then for any given initial value 
\begin{equation}\label{initial1}
\{ x(u): -\tau\leq u \leq 0\}=\xi(u) \in C([-\tau,0]:\mathbb{R}_+),
\end{equation}
there exists a unique solution $x(t)$ to \textup{SDDE} \eqref{cir4} and $x(t)>0$ almost surely.
\end{theorem}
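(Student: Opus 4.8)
The plan is to combine the classical It\^o--Lyapunov non-explosion technique with the method of steps, which is natural here because the delay turns the equation into a genuine (non-delayed) SDE on each interval $[(k-1)\tau,k\tau]$. First I would record local existence and uniqueness: the local Lipschitz continuity of the coefficients furnished by Lemma \ref{lemma1} (valid on each region where $x$ is bounded away from $0$ and $\infty$) yields, for the datum $\xi\in C([-\tau,0];\mathbb{R}_+)$, a unique maximal local solution on $[-\tau,\tau_e)$, where $\tau_e$ is the explosion/boundary-hitting time. The whole task then reduces to showing $\tau_e=\infty$ and $x(t)>0$ for all $t$ almost surely.

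I would argue interval by interval. On $[0,\tau]$ the delayed argument equals the known, continuous, strictly positive datum $\xi(\cdot-\tau)$, so $g(x(t),\xi(t-\tau))=\sigma\,\xi(t-\tau)^{r}x(t)^{\theta}$ with the factor $\sigma\,\xi(t-\tau)^{r}$ bounded above and below on $[0,\tau]$. To control both boundaries simultaneously, fix $n_0$ large with $1/n_0<\min_{[-\tau,0]}\xi\le\|\xi\|<n_0$, and for $n\ge n_0$ define the exit times $\tau_n=\inf\{t\in[0,\tau]:x(t)\notin(1/n,n)\}$, which increase to some $\tau_\infty\le\tau_e$. I would introduce a Lyapunov function that blows up at both ends, e.g. $V(x)=x^{p}+x^{-q}$ with $p,q>0$ to be fixed, and apply the It\^o formula \eqref{eq2:3} to $V(x(t\wedge\tau_n))$. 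The goal is a differential inequality $LV(x,y,t)\le K(1+V(x))$ on the relevant range; Gronwall's inequality then gives $\mathbb{E}V(x(\tau_n\wedge\tau))\le(V(\xi(0))+K\tau)e^{K\tau}$, and since $V(x(\tau_n))\ge n^{p}\wedge n^{q}\to\infty$ this forces $\mathbb{P}(\tau_n\le\tau)\to0$, hence $\tau_\infty\ge\tau$ a.s. and $x$ stays in $(0,\infty)$ on $[0,\tau]$. One then restarts on $[\tau,2\tau]$ with $x(\cdot-\tau)$ now the positive, path-bounded solution just produced, and iterates; by induction the solution exists, is unique, and is positive on $[0,k\tau]$ for every $k$, i.e. on all of $[0,\infty)$.

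The crux is the estimate $LV\le K(1+V)$, and this is exactly where Assumption \ref{sec2:assump:1} enters. At $+\infty$ the mean-reverting drift contributes $-\alpha p\,x^{p+\gamma-1}$ from $x^p$, while the diffusion contributes a term of order $x^{p-2}g(x,y)^2=\sigma^2 x^{p-2+2\theta}y^{2r}$; splitting the mixed power by the Young inequality as $x^{2\theta}y^{2r}\le C(x^{2(\theta+r)}+y^{2(\theta+r)})$, the worst pure-$x$ contribution is of order $x^{p-2+2(\theta+r)}$, which the drift dominates precisely because $1+\gamma>2(r+\theta)$ yields $p+\gamma-1>p-2+2(\theta+r)$. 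The residual delayed term $x^{p-2}y^{2(\theta+r)}$ is harmless on each step since $y$ is bounded there. At $0$ the drift gives the strongly negative $-\alpha q\mu\,x^{-q-1}$ while the diffusion vanishes (as $\theta,r>0$), so this good drift term controls $V=x^{-q}$ near the origin; this is the mean-reversion/Feller mechanism that keeps $x$ strictly positive.

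I expect the main obstacle to be precisely this Lyapunov estimate: the simultaneous control of the super-linear diffusion and the unbounded delay variable $y=x(t-\tau)$. One must organise the Young-type splittings so that every super-linear $x$-power is absorbed by $-\alpha x^{\gamma}$ through Assumption \ref{sec2:assump:1}, while the delayed powers $y^{2(\theta+r)}$ are quarantined and disposed of step by step via their per-interval bounds. Additional care is needed at the origin, where choosing $p,q$ (in particular $q$ small enough) so that the positive drift overwhelms the vanishing diffusion secures positivity; the remaining terms of $LV$ are lower order and present no difficulty.
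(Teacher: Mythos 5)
Your overall architecture coincides with the paper's: obtain a unique maximal local solution from the local Lipschitz property (Lemma \ref{lemma1}), introduce two-sided stopping times, apply the It\^{o} formula to a Lyapunov function that blows up at both $0$ and $\infty$, and conclude $\mathbb{P}(\tau_n\le T)\to 0$. The genuine differences are technical. You take $V(x)=x^{p}+x^{-q}$ and aim for $LV\le K(1+V)$ plus Gronwall, where the paper takes $V(x)=x^{\beta}-1-\beta\log x$, $\beta\in(0,1)$, and asserts the stronger uniform bound $LV(x,y)\le K_0$. More importantly, you quarantine the delay variable by the method of steps, bounding $y$ on each interval $[k\tau,(k+1)\tau]$, whereas the paper treats the factors $y^{2r}$ as if they were harmless in a bound claimed for all $x,y>0$. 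On this point your version is the more defensible one: for fixed small $x$ the positive term $\tfrac{\sigma^2}{2}\beta x^{2\theta-2}y^{2r}\bigl[1-(1-\beta)x^{\beta}\bigr]$ in the paper's $LV$ is unbounded in $y$, so a uniform constant bound cannot hold, and restricting $y$ to the previously constructed segment is exactly what makes the estimate meaningful. One caveat on your induction: from the second step onwards the bound on $y$ is a random (pathwise) constant, so to run the expectation/Gronwall argument you must localise on the $\mathcal{F}_{k\tau}$-measurable events $A_m=\{1/m<\inf_{[(k-1)\tau,k\tau]}x\le\sup_{[(k-1)\tau,k\tau]}x<m\}$ and let $m\to\infty$ afterwards; this is routine but needs to be said, since a random constant cannot be pulled out of the expectations directly.

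The step that fails as written is your treatment of the origin. It is not true that ``the diffusion vanishes'' there: the diffusion contribution to $LV$ is $\tfrac12 V''(x)g(x,y)^2$, and for $V=x^{p}+x^{-q}$ this contains the positive term $\tfrac{\sigma^2}{2}q(q+1)x^{2\theta-q-2}y^{2r}$, which blows up as $x\to 0$ whenever $\theta<1$ (for any $q>0$). Whether the good drift term $-\alpha\mu q x^{-q-1}$ dominates it is a pure comparison of exponents, $-q-1<2\theta-q-2$, i.e.\ $\theta>1/2$, and $q$ cancels from this comparison, so ``choosing $q$ small enough'' cannot rescue the estimate (only when $\theta>1$ does a small $q$ additionally make the diffusion term bounded, but that is already inside the regime $\theta>1/2$). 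For $\theta>1/2$ your inequality $LV\le K(1+V)$ does hold near $0$, and the rest of your argument --- the Young splitting at infinity absorbed by $-\alpha x^{\gamma}$ via $1+\gamma>2(r+\theta)$, exactly as in the paper --- goes through. For $\theta\le 1/2$ it fails, and the failure is not a technicality: near $0$ the equation behaves like $dx=\alpha\mu\,dt+\tilde{\sigma}x^{\theta}dB$ with $\tilde{\sigma}$ bounded between positive constants, and for constant $\tilde{\sigma}$ Feller's test shows the boundary $0$ is attainable when $\theta<1/2$, so no Lyapunov argument can yield almost sure positivity there. In fairness, the paper's proof hides the same restriction (its claim that $-\alpha\mu\beta x^{-1}$ ``leads'' for small $x$ also requires $2\theta-2>-1$), so your proposal is no weaker than the published argument; but your stated justification for the origin is incorrect and should be replaced by the explicit exponent comparison, with the restriction $\theta>1/2$ acknowledged.
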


\begin{proof}
Since the coefficients of SDDE \eqref{cir4} satisfy local Lipschitz condition in $[-\tau,\infty)$, one can show by the standard truncation method that there exists a unique maximal local solution $x(t)$ on $[-\tau,\eta_{e})$ for any given initial value \eqref{initial1}, where $\eta_{e}$ is the explosion time (see \cite{Rassias}). Let  $k_0>0$ be sufficiently large such that
\begin{equation*}\label{initial2}
  \frac{1}{k_0}<\underset{-\tau\le u\leq0}{\min}\vert \xi(u)\vert\le \underset{-\tau\le t\le 0}{\max}\vert\xi(u)\vert<k_0.
\end{equation*}
For each integer $k\geq k_0$, we define the stopping time by
\begin{equation}\label{stoptime1}
\eta_k=\inf\{ t\in [0,\eta_{e}): x(t)\not\in [1/k,k]\}.
\end{equation}
We observe that $\eta_k$ is increasing  as $k\rightarrow \infty$. We set $\eta_{\infty}=\underset{k\rightarrow \infty}\lim \eta_k$, whence $\eta_{\infty}\leq \eta_e$ almost surely. In other words, we need to show that $ \eta_{\infty}=\infty$ almost surely to complete the proof. We define a $C^2$-function $V:\mathbb{R_+}\rightarrow \mathbb{R_+}$ by
\begin{equation}\label{sec2:eq1}
V(x)=x^{\beta}-1-\beta\log(x), 
\end{equation}
where $\beta\in(0,1)$. By applying the operator defined in \eqref{eq2:2} to \eqref{sec2:eq1}, we compute
\begin{align*}
 LV(x,y)&=\beta(x^{\beta-1}-x^{-1})\alpha(\mu-x^{\gamma})+\frac{\sigma^2}{2}(\beta(\beta-1)x^{\beta-2}+\beta x^{-2})x^{2\theta}y^{2r}\\
&= \alpha\mu\beta x^{\beta-1}-\alpha\mu\beta x^{-1}-\alpha\beta x^{\gamma+\beta-1}+\alpha\beta x^{\gamma-1}\\
&+\frac{\sigma^2}{2}\beta(\beta-1)x^{2\theta+\beta-2}y^{2r}+\frac{\sigma^2}{2}\beta x^{2\theta-2}y^{2r}.
\end{align*}
We note that for $\beta\in(0,1)$, $\gamma+\beta-1>2\theta+2r+\beta-2\Rightarrow \gamma+1>2(r+\theta)$. So by Assumption \ref{sec2:assump:1} and for $\beta\in(0,1)$, $-\alpha\beta x^{\gamma+\beta-1}$ leads and tends to $-\infty$ for large $x$. However, for small $x$, $-\alpha\mu\beta x^{-1}$ leads and also tends to $-\infty$. Hence, we can find a constant $K_0$ such that 
\begin{equation}\label{last}
LV(x,y)\le K_0. 
\end{equation}
For any $t_1\in [0,\tau]$, the It\^{o} formula yields
\begin{align*}
\mathbb{E}(V(x(\eta_k\wedge t_1)))&\le V(\xi(0))+\mathbb{E}\int_0^{\eta_k\wedge t_1}K_0ds\\
&\le V(\xi(0))+K_0\tau,
\end{align*}
for all $k\ge k_0$. Noting that
\begin{align*}
\mathbb{P}(\eta_k\le \tau)\le \frac{\mathbb{E}[V(x(\eta_k\wedge t_1))]}{V(k)\wedge V(1/k)}, 
\end{align*}
we have
\begin{equation*}
\mathbb{P}(\eta_k\le \tau)\le \frac{V(\xi(0))+K_0\tau}{V(k)\wedge V(1/k)}.
\end{equation*}
As $k\rightarrow \infty$, $ \mathbb{P}(\eta_k\leq \tau)\rightarrow 0 $ and hence, $ \eta_{\infty}>\tau$ almost surely. For $t_1\in [0,2\tau]$, the It\^{o} formula gives us
\begin{align*}
\mathbb{E}(V(x(\eta_k\wedge t_1)))&\le V(\xi(0))+\mathbb{E}\int_0^{\eta_k\wedge t_1}K_0ds\\
&\le V(\xi(0))+2K_0\tau,
\end{align*}
for all $k\ge k_0$. This also means that 
\begin{equation*}
\mathbb{P}(\eta_k\le 2\tau)\le \frac{V(\xi(0))+2K_0\tau}{V(k)\wedge V(1/k)}.
\end{equation*}
As $k\rightarrow \infty$, we get $ \eta_{\infty}>2\tau$ almost surely. Meanwhile, for $t_1\in [0,T]$, we also derive from the It\^{o} formula that 
\begin{align*}
\mathbb{E}(V(x(\eta_k\wedge t_1)))&\le V(\xi(0))+\mathbb{E}\int_0^{\eta_k\wedge t_1}K_0ds\\
&\le V(\xi(0))+K_0T,
\end{align*}
for all $k\ge k_0$. This also means that 
\begin{equation}\label{probability1}
\mathbb{P}(\eta_k\le T)\le \frac{V(\xi(0))+K_0T}{V(k)\wedge V(1/k)}.
\end{equation}
Since $V(k)\wedge V(1/k)\to\infty$ as $k\to\infty$, it follows that
\begin{align*}
\lim_{k\to\infty} \mathbb{P}(\eta_k\le T)=0.
\end{align*}
Noting that $\eta_k \uparrow \eta_\infty$, we obtain
\begin{align*}
\mathbb{P}(\eta_\infty \le T)
= \lim_{k\to\infty} \mathbb{P}(\eta_k \le T)
= 0.
\end{align*}
Since $T>0$ is arbitrary, we conclude that $\mathbb{P}(\eta_\infty < \infty)=0$, and hence $\eta_\infty=\infty$ almost surely. Therefore,
\begin{align*}
\mathbb{P}\big(0 < x(t) < \infty \text{ for all } t \in [0,T]\big)=1.
\end{align*}
\end{proof}
\subsection{Boundedness}
We also present the following useful result that is required to establish uniform boundedness of the true solution of SDDE \eqref{cir4}.
\begin{lemma}\label{bound}
Let Assumption \ref{sec2:assump:1} hold and $\beta\in(0,1)$. Then for any initial value $\xi(0)$, the solution $x(t)$ of \textup{SDDE} \eqref{cir4} fulfils 
\begin{equation*}
\mathbb{E}\Big[ x(t)^{\beta}-1-\beta\log(x(t)) \Big]\le \xi(0)^{\beta}-1-\beta\log(\xi(0))+\bar{K}_0
\end{equation*}
for all $t\ge 0$ and 
\begin{equation*}
\underset{t\rightarrow \infty}{\limsup}\mathbb{E}\Big[x(t)^{\beta}-1-\beta\log(x(t))\Big]\le \bar{K}_0
\end{equation*}
where $\bar{K}_0$ is a positive constant that does not depend on the initial value $\xi(0)$.
\end{lemma}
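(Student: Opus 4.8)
The plan is to reuse the Lyapunov function $V(x)=x^{\beta}-1-\beta\log(x)$ from the preceding existence proof and to combine it with an exponential weight $e^{\lambda t}$, which converts the dissipativity of $LV$ into a bound that is uniform in $t$. First I would record that $V(x)\ge 0$ for every $x>0$, since $V$ attains its minimum value $0$ at $x=1$; this lets me bound $e^{-\lambda t}V(\xi(0))\le V(\xi(0))$ at the end. I would then reuse the computation of
\[
LV(x,y)=\alpha\mu\beta x^{\beta-1}-\alpha\mu\beta x^{-1}-\alpha\beta x^{\gamma+\beta-1}+\alpha\beta x^{\gamma-1}+\tfrac{\sigma^{2}}{2}\beta(\beta-1)x^{2\theta+\beta-2}y^{2r}+\tfrac{\sigma^{2}}{2}\beta x^{2\theta-2}y^{2r},
\]
keeping the dominant dissipative term $-\alpha\beta x^{\gamma+\beta-1}$ and discarding the manifestly nonpositive term $\tfrac{\sigma^{2}}{2}\beta(\beta-1)x^{2\theta+\beta-2}y^{2r}$.

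The heart of the argument is the single genuinely problematic term $\tfrac{\sigma^{2}}{2}\beta x^{2\theta-2}y^{2r}$, which couples the current state $x$ to the delayed state $y=x(t-\tau)$. I would split it by Young's inequality as $x^{2\theta-2}y^{2r}\le c_{1}x^{\kappa}+c_{2}y^{\kappa}$ with $\kappa=2(\theta+r)-2$. Assumption \ref{sec2:assump:1} gives $2(r+\theta)<\gamma+1$, hence $\kappa<\gamma-1<\gamma+\beta-1$, so \emph{both} resulting powers lie strictly below the dissipative power. All remaining $x$-only contributions ($\alpha\mu\beta x^{\beta-1}$, $\alpha\beta x^{\gamma-1}$, the $x^{-1}$ and logarithmic pieces, the term $c_{1}x^{\kappa}$, and the added $\lambda V(x)$) are of lower order than $x^{\gamma+\beta-1}$ for large $x$ and are dominated near the origin by the negative term $-\alpha\mu\beta x^{-1}$, so they are absorbed into a fraction of the dissipation plus a constant. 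For a sufficiently small $\lambda>0$ this yields a pointwise inequality of the shape
\[
\lambda V(x)+LV(x,y)\le K_{1}-c\,x^{\gamma+\beta-1}+c_{2}\,y^{\kappa}.
\]

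Next I would apply the It\^o formula to $e^{\lambda t}V(x(t))$ and take expectations, so that everything reduces to estimating $\mathbb{E}\int_{0}^{t}e^{\lambda s}[\lambda V+LV]\,ds$. The delay is handled by the shift identity: the substitution $u=s-\tau$ turns $\int_{0}^{t}e^{\lambda s}x(s-\tau)^{\kappa}\,ds$ into $e^{\lambda\tau}\int_{-\tau}^{t-\tau}e^{\lambda u}x(u)^{\kappa}\,du$, whose contribution on $[-\tau,0]$ is a finite initial-segment constant $M_{\xi}$ controlled by $\|\xi\|$, and whose contribution on $[0,t]$ is an exponentially weighted integral of $x^{\kappa}$. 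A second use of Young's inequality, $c_{2}e^{\lambda\tau}x^{\kappa}\le c\,x^{\gamma+\beta-1}+C_{2}$ (legitimate since $\kappa<\gamma+\beta-1$), lets this delayed integral be absorbed \emph{exactly} into the buffer $-c\int_{0}^{t}e^{\lambda s}\mathbb{E}[x^{\gamma+\beta-1}]\,ds$, so the two high-power integrals cancel and only constants remain. Dividing by $e^{\lambda t}$ gives
\[
\mathbb{E}[V(x(t))]\le e^{-\lambda t}V(\xi(0))+e^{-\lambda t}M_{\xi}+\frac{K_{1}+C_{2}}{\lambda}\bigl(1-e^{-\lambda t}\bigr),
\]
from which the first assertion follows on bounding $e^{-\lambda t}\le 1$, while letting $t\to\infty$ kills the decaying terms and yields the $\limsup$ assertion with $\bar{K}_{0}=(K_{1}+C_{2})/\lambda$ independent of the initial value.

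I expect the main obstacle to be the mixed delay term $x^{2\theta-2}y^{2r}$: the whole estimate only closes because Assumption \ref{sec2:assump:1} forces the post-Young exponent $\kappa=2(\theta+r)-2$ to remain strictly below the dissipative exponent $\gamma+\beta-1$, and because the shift identity recycles the delayed high-power integral back into a current-state integral where the dissipation can cancel it; the persistent, non-decaying part of the resulting estimate is then genuinely independent of the initial data. Two secondary checks are needed — that $\lambda$ is taken small enough that adding $\lambda V$ does not swamp the dissipation, and that the negative power and logarithmic pieces are indeed controlled near the origin by $-\alpha\mu\beta x^{-1}$ — but these are routine once the dominant balance above is in place.
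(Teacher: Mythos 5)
Your proposal is correct in outline and, at the decisive step, takes a genuinely different route from the paper --- in fact a more watertight one. The paper works with the same Lyapunov function $V(x)=x^{\beta}-1-\beta\log(x)$, multiplies by $e^{t}$, and then disposes of the drift term in one line by invoking the pointwise bound \eqref{last}, i.e.\ $V(x)+LV(x,y)\le\bar{K}_0$ for all $x,y>0$. That pointwise claim ignores the delay variable: the term $\tfrac{\sigma^{2}}{2}\beta x^{2\theta-2}y^{2r}$ is not bounded above uniformly in $y$ (at $x=1$ one has $LV(1,y)=\tfrac{\sigma^{2}}{2}\beta^{2}y^{2r}\to\infty$ as $y\to\infty$), so the paper's domination argument is only valid when the delayed state ranges over a bounded set. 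Your treatment --- discard the negative term $\tfrac{\sigma^{2}}{2}\beta(\beta-1)x^{2\theta+\beta-2}y^{2r}$, split the cross term by Young's inequality into $x^{\kappa}$ and $y^{\kappa}$ with $\kappa=2(\theta+r)-2<\gamma+\beta-1$ (Assumption \ref{sec2:assump:1}), shift $\int_{0}^{t}e^{\lambda s}x(s-\tau)^{\kappa}\,ds$ back to $[-\tau,t-\tau]$, and absorb the resulting current-state integral into the retained dissipation $-c\int_{0}^{t}e^{\lambda s}x(s)^{\gamma+\beta-1}\,ds$ --- is the standard and correct way to handle an unbounded delay (it is essentially what is done in \cite{emma}), and it supplies precisely the control that the paper's shortcut omits. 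The trade-off: your bound carries an extra transient term $e^{-\lambda t}M_{\xi}$ depending on the whole initial segment $\xi|_{[-\tau,0]}$ rather than on $\xi(0)$ alone, but since that term decays, your $\limsup$ constant $(K_{1}+C_{2})/\lambda$ is genuinely independent of all initial data, which is the real content of the lemma; the paper's argument, where it applies, is shorter but does not actually close the estimate for unbounded $y$.

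Two points need tightening in a full write-up. First, the Young step $x^{2\theta-2}y^{2r}\le c_{1}x^{\kappa}+c_{2}y^{\kappa}$ presupposes $2\theta-2\ge 0$; if $\theta<1$ the exponent of $x$ is negative and the inequality fails near $x=0$, so that case needs a separate splitting in which the negative power of $x$ is controlled by $-\alpha\mu\beta x^{-1}$ (which in turn forces $\theta>1/2$ --- a restriction the paper makes silently as well). Second, you must localize with the stopping times $\eta_{k}$ of \eqref{stoptime1} before taking expectations and then pass to the limit by Fatou's lemma, as the paper does: this is needed not only so that the stochastic integral has zero expectation, but also so that the two high-power integrals you cancel against each other are finite, making the cancellation legitimate. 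Both repairs are routine, but they belong in the final proof.
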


\begin{proof}
We define $V_1\in C^{2,1}(\mathbb{R}_+\times \mathbb{R}_+;\mathbb{R}_+)$ by $V_1(x,t)=e^tV(x)$, where $V(x)$ is  the same as \eqref{sec2:eq1}. We compute from the diffusion operator in \eqref{eq2:2} that
\begin{align*}
LV_1(x,y,t)&=e^t\Big[x^{\beta}-1-\beta\log(x)+\alpha\mu\beta x^{\beta-1}-\alpha\mu\beta x^{-1}-\alpha\beta x^{\gamma+\beta-1}+\alpha\beta x^{\gamma-1}\\
&+\frac{\sigma^2}{2}\beta(\beta-1)x^{2\theta+\beta-2}y^{2r}+\frac{\sigma^2}{2}\beta x^{2\theta-2}y^{2r}\Big]\\
&=e^t\Big[x^{\beta}-1-\beta\log(x)\Big]+e^t\Big[\alpha\mu\beta x^{\beta-1}-\alpha\mu\beta x^{-1}-\alpha\beta x^{\gamma+\beta-1}+\alpha\beta x^{\gamma-1}\\
&+\frac{\sigma^2}{2}\beta(\beta-1)x^{2\theta+\beta-2}y^{2r}+\frac{\sigma^2}{2}\beta x^{2\theta-2}y^{2r}\Big].
\end{align*}
Hence, by \eqref{last}, there exists a constant such that
\begin{align*}
LV_1(x,y,t)&\le e^t(V(x)+LV(x,y))\\
&\le e^t\bar{K}_0.
\end{align*}
Using the same stopping time as defined in \eqref{stoptime1}, we derive from the It\^{o} formula that
\begin{align*}
\mathbb{E}V_1(x(t\wedge \tau_k),t\wedge \tau_k)\le V_1(\xi(0),0)+\int_0^{t\wedge \tau_k} LV_1(x(s),x(t-\tau),s)ds.
\end{align*}
This implies that
\begin{align*}
e^{t\wedge \tau_k}V(x(t\wedge \tau_k))\le V(\xi(0))+\int_0^{t\wedge \tau_k}e^s(V(x(s))+ LV(x(s),x(t-\tau)))ds.
\end{align*}
By applying the Fatou lemma and setting $k\rightarrow \infty$, we now have
\begin{align*}
e^tV(x(t))\le V(\xi(0))+e^t\bar{K}_0.
\end{align*}
This also implies that
\begin{align*}
x(t)^{\beta}-1-\beta\log(x(t))\le \frac{V(\xi(0))}{e^t}+\bar{K}_0,
\end{align*}
which gives both assertions as required. 
\end{proof}
The following result reveals that the true solution of SDDE \eqref{cir4} will stay in a compact support with large probability.
\begin{theorem}
Let Assumption \ref{sec2:assump:1} hold and $\beta\in(0,1)$.  Then for any initial value $\xi(0)$ and $k>k_0$, there exists a constant $\bar{K}_0$ such that 
\begin{align*}
\mathbb{P}(1/k< x(t)< k)\ge 1-\epsilon
\end{align*}
for all $t\ge 0$,
\begin{align*}
\epsilon=\Big[\xi(0)^{\beta}-1-\beta\log(\xi(0))+\bar{K}_0\Big]\Big[
\frac{1}{(1/k)^{\beta}-1+\beta\log(k)} +\frac{1}{k^{\beta}-1-\beta\log(k)}\Big].
\end{align*}
\end{theorem}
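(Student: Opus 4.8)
The plan is to prove the estimate by a Chebyshev-type argument built around the same Lyapunov function $V(x)=x^{\beta}-1-\beta\log(x)$ used in the preceding results, exploiting the moment bound already supplied by Lemma \ref{bound}. First I would record the elementary qualitative features of $V$ on $\mathbb{R}_+$ that make it suited to controlling both tails simultaneously: differentiating gives $V'(x)=\beta(x^{\beta-1}-x^{-1})$, which is negative on $(0,1)$ and positive on $(1,\infty)$, so $V$ attains its minimum value $V(1)=0$ at $x=1$ and is nonnegative everywhere. In particular $V$ is strictly increasing on $[1,\infty)$ and strictly decreasing on $(0,1]$, and for $k>k_0$ taken large enough both $V(k)=k^{\beta}-1-\beta\log(k)$ and $V(1/k)=(1/k)^{\beta}-1+\beta\log(k)$ are strictly positive, since $k^{\beta}$ eventually dominates $\log k$ and $\beta\log k$ dominates $(1/k)^{\beta}$.

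Next I would split the complementary event $\{x(t)\le 1/k\}\cup\{x(t)\ge k\}$ into its two pieces and estimate each by a one-sided Markov inequality. On $\{x(t)\ge k\}$ the monotonicity of $V$ on $[1,\infty)$ gives $V(x(t))\ge V(k)$, so that
\[
\mathbb{E}[V(x(t))]\ge \mathbb{E}\big[V(x(t))\,1_{\{x(t)\ge k\}}\big]\ge V(k)\,\mathbb{P}(x(t)\ge k),
\]
whence $\mathbb{P}(x(t)\ge k)\le \mathbb{E}[V(x(t))]/V(k)$. Symmetrically, using that $V$ decreases on $(0,1]$ so that $V(x(t))\ge V(1/k)$ on $\{x(t)\le 1/k\}$, I would obtain $\mathbb{P}(x(t)\le 1/k)\le \mathbb{E}[V(x(t))]/V(1/k)$.

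Then I would invoke Lemma \ref{bound}, which bounds $\mathbb{E}[V(x(t))]\le \xi(0)^{\beta}-1-\beta\log(\xi(0))+\bar{K}_0$ \emph{uniformly} for all $t\ge 0$; this uniformity is exactly what yields a bound valid for every $t\ge 0$ rather than only on a finite horizon. Summing the two tail estimates and passing to the complement gives
\[
\mathbb{P}(1/k<x(t)<k)=1-\mathbb{P}\big(\{x(t)\le 1/k\}\cup\{x(t)\ge k\}\big)\ge 1-\mathbb{E}[V(x(t))]\Big[\tfrac{1}{V(1/k)}+\tfrac{1}{V(k)}\Big],
\]
which, after inserting the moment bound, is precisely $1-\epsilon$ with $\epsilon$ as stated. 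There is no serious analytic obstacle here: the argument is routine once the monotonicity of $V$ is in hand. The only point requiring care is ensuring the denominators $V(k)$ and $V(1/k)$ are strictly positive, so that the divisions and the resulting inequality are meaningful, which is where the restriction $k>k_0$ (with $k_0$ chosen large) enters.
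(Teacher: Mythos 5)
Your proposal is correct and follows essentially the same route as the paper: a Markov/Chebyshev-type tail estimate for each of the two events $\{x(t)\le 1/k\}$ and $\{x(t)\ge k\}$ using the monotonicity of $V(x)=x^{\beta}-1-\beta\log(x)$ on $(0,1]$ and $[1,\infty)$, followed by the uniform-in-$t$ moment bound from Lemma \ref{bound} and a union bound on the complement. Your explicit attention to the strict positivity of $V(k)$ and $V(1/k)$ for large $k$ is a point the paper leaves implicit, but the argument is the same.
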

\begin{proof}
We compute from Lemma \ref{bound} that for any $t\ge 0$, we have 
\begin{align*}
\mathbb{P}(x(t)&\le 1/k)\le \mathbb{E}\Big[1_{\{x(t)\le 1/k\}}\frac{x(t)^{\beta}-1-\beta\log(x(t))}{(1/k)^{\beta}-1+\beta\log(k)}    \Big]\\
&\le \frac{\xi(0)^{\beta}-1-\beta\log(\xi(0))+\bar{K}_0}{(1/k)^{\beta}-1+\beta\log(k)}.
\end{align*}
Similarly, we also obtain from Lemma \ref{bound} that for any $t\ge 0$
\begin{align*}
\mathbb{P}(x(t)&\ge k)\le \mathbb{E}\Big[1_{\{x(t)\ge k\}}\frac{x(t)^{\beta}-1-\beta\log(x(t))}{k^{\beta}-1-\beta\log(k)}    \Big]\\
&\le \frac{\xi(0)^{\beta}-1-\beta\log(\xi(0))+\bar{K}_0}{k^{\beta}-1-\beta\log(k)}.
\end{align*}
This implies that,
\begin{align*}
\mathbb{P}(1/k< x(t)<k)\ge 1- \Big[\xi(0)^{\beta}-1-\beta\log(\xi(0))+\bar{K}_0\Big]\Big[
\frac{1}{(1/k)^{\beta}-1+\beta\log(k)} +\frac{1}{k^{\beta}-1-\beta\log(k)}\Big],
\end{align*}
as required.
\end{proof}
\section{Numerical method}\label{sect4}
In this section, we develop truncated EM techniques to estimate the solution of SDDE \eqref{cir4}. Moreover, we establish some properties of the numerical solutions.
\subsection{The truncated EM method}
Before we construct the numerical method,  we need to extend the domain of SDDE \eqref{cir4} from $\mathbb{R}_+$ to $\mathbb{R}$. We should mention that this extension does not affect previous results in anyway. To define the truncated EM method, we choose a strictly increasing continuous function $z:\mathbb{R}_+\rightarrow \mathbb{R}_+$ such that $z(u)\rightarrow\infty$ as $u\rightarrow \infty$ and 
\begin{equation}\label{sec4:eq:1}
\sup_{\vert x\vert\vee\vert y\vert \le u}\Big(|f(x)|\vee g(x,y)\Big)\le z(u), 
\end{equation}
for all $u\ge 0$. Denote by $z^{-1}$ the inverse function of $z$ and we see that $z^{-1}$ is strictly increasing continuous function from $[z(0),\infty)$ to $\mathbb{R}_+$. We also choose a number $\Delta^*\in (0,1]$ and a strictly decreasing function 
$\psi:(0,\Delta^*]\rightarrow \mathbb{R}_+$ such that
\begin{equation}\label{sec4:eq:2}
\quad \psi(\Delta^*)\ge z(1), \lim_{\Delta \rightarrow 0}\psi(\Delta)=\infty \text{ and } \Delta^{1/4}\psi(\Delta)\le 1, \quad \forall \Delta \in (0,1).
\end{equation}
For a given step size $\Delta \in (0,\Delta^*)$, we then define the truncated functions by
\begin{align*}  
f_{\Delta}(x)&=
\begin{cases}
  f\Big(x\wedge z^{-1}(\psi(\Delta))\Big), & \mbox{if $x\geq 0$ }\\
  \alpha\mu,                             & \mbox{if $x<0$},
\end{cases}
\\
g_{\Delta}(x,y)&=
\begin{cases}
  g\Big(x\wedge z^{-1}(\psi(\Delta)),y\wedge z^{-1}(\psi(\Delta))\Big), & \mbox{if $x,y\geq 0$ }\\
  0,                             & \mbox{if $x,y<0$},
\end{cases}
\end{align*}
for all $x,y\in \mathbb{R}$. So for $x,y\in[0,z^{-1}(\psi(\Delta))]$, we observe that
\begin{align}\label{sec4:eq:3}
|f_{\Delta}(x)|\vee g_{\Delta}(x,y)\le z(z^{-1}(\psi(\Delta)))=\psi(\Delta),
\end{align}
for all $x,y\in \mathbb{R}$. That is, $f_{\Delta}$ and $g_{\Delta}$ are bounded by $\psi(\Delta)$ although $f$ and $g$ are unbounded. From now on, we let $T > 0$ be arbitrarily fixed. We also let the step size $\Delta\in (0,\Delta^*]$ be a fraction of $\tau$, that is, $\Delta=\frac{\tau}{M}$ for some integer $M>\tau$. We construct the discrete-time truncated EM approximation of SDDE \eqref{cir4} by defining $t_k=k\Delta$ for $-M\le k\le\infty$, setting $X_{\Delta}(t_k)=\xi(t_k)$ for $-M\le k\le 0$ and computing 
\begin{equation}\label{eq:27}
X_{\Delta}(t_{k+1})=X_{\Delta}(t_k)+f_{\Delta}(X_{\Delta}(t_k))\Delta+g_{\Delta}(X_{\Delta}(t_{k}),X_{\Delta}(t_{k-M}))\Delta B_k
\end{equation}
for $k\ge 0$, where $\Delta B_k=B(t_{k+1})-B(t_k)$ is an increment of the Brownian motion. We define the continuous-time truncated EM step process by
\begin{equation}\label{TEM1}
\bar{x}_{\Delta}(t)=\sum_{k=-M}^{\infty}X_{\Delta}(t_k)1_{[t_k,t_{k+1})}(t)
\end{equation}
where $1_{[t_k,t_{k+1})}$ is the indicator function on $[t_k,t_{k+1})$.  The  continuous-time continuous truncated EM process is defined by setting $x_{\Delta}(u)=\xi(u)$ for $u\in [-\tau,0]$ while for $t\geq 0$, we get
\begin{equation}\label{TEM2}
x_{\Delta}(t)=\xi(0)+\int_0^t f_{\Delta}(\bar{x}_{\Delta}(s))ds+\int_0^t g_{\Delta}(\bar{x}_{\Delta}(s),\bar{x}_{\Delta}(s-\tau))dB(s).
\end{equation}
We observe that $x_{\Delta}(t)$ is an It\^{o} process on $t\geq 0$ satisfying It\^{o} differential
\begin{equation}\label{eq:30}
dx_{\Delta}(t)= f_{\Delta}(\bar{x}_{\Delta}(t))dt+g_{\Delta}(\bar{x}_{\Delta}(t),\bar{x}_{\Delta}(t-\tau))dB(t).
\end{equation}
It is important to note that $x_{\Delta}(t_{k})=\bar{x}_{\Delta}(t_k)=X_{\Delta}(t_k)$ for $k\ge -M$.
\subsection{Properties of numerical solution}
The following lemma shows that the discrete-time process $\bar{x}_{\Delta}(t)$ and the continuous-time process $x_{\Delta}(t)$ are close to each other in the strong sense.
\begin{lemma}\label{sec5:eq:L1}
For any fixed $\Delta\in (0,\Delta^*]$ and $p\ge 2$, we have 
\begin{equation}
 \mathbb{E}\vert x_{\Delta}(t)-\bar{x}_{\Delta}(t)\vert^{p}\le c_p\Delta^{\frac{p}{2}}(\psi(\Delta))^p,
\end{equation}
for all $t\geq 0$, where $c_p$ is a generic constant that is dependent only on $p$.
\end{lemma}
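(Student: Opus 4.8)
The plan is to exploit that the step process $\bar{x}_\Delta$ is piecewise constant and that the truncated coefficients are uniformly bounded by $\psi(\Delta)$ via \eqref{sec4:eq:3}. Fix $t\ge 0$ and let $k\ge 0$ be the integer with $t\in[t_k,t_{k+1})$. By the identity $x_\Delta(t_k)=\bar{x}_\Delta(t_k)=X_\Delta(t_k)$ recorded after \eqref{eq:30}, we have $x_\Delta(t)-\bar{x}_\Delta(t)=x_\Delta(t)-x_\Delta(t_k)$, which by \eqref{TEM2} equals $\int_{t_k}^t f_\Delta(\bar{x}_\Delta(s))\,ds+\int_{t_k}^t g_\Delta(\bar{x}_\Delta(s),\bar{x}_\Delta(s-\tau))\,dB(s)$. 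The crucial observation is that both integrands are \emph{constant} on $[t_k,t)$: for such $s$ we have $\bar{x}_\Delta(s)=X_\Delta(t_k)$, and because $\tau=M\Delta$ the shifted time satisfies $s-\tau\in[t_{k-M},t_{k-M+1})$, whence $\bar{x}_\Delta(s-\tau)=X_\Delta(t_{k-M})$. Therefore the difference collapses to
\[
x_\Delta(t)-\bar{x}_\Delta(t)=f_\Delta(X_\Delta(t_k))\,(t-t_k)+g_\Delta(X_\Delta(t_k),X_\Delta(t_{k-M}))\,\bigl(B(t)-B(t_k)\bigr).
\]

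I would then take $p$-th moments and split via $|a+b|^p\le 2^{p-1}(|a|^p+|b|^p)$. For the drift term, \eqref{sec4:eq:3} gives $|f_\Delta(X_\Delta(t_k))|\le\psi(\Delta)$ and $|t-t_k|\le\Delta$, producing a bound of order $(\psi(\Delta))^p\Delta^p$. For the diffusion term, since $X_\Delta(t_k)$ and $X_\Delta(t_{k-M})$ are $\mathcal{F}_{t_k}$-measurable while $B(t)-B(t_k)$ is independent of $\mathcal{F}_{t_k}$, conditioning on $\mathcal{F}_{t_k}$ factorises the expectation; combined with $g_\Delta\le\psi(\Delta)$ and the Gaussian moment identity $\mathbb{E}|B(t)-B(t_k)|^p=C_p|t-t_k|^{p/2}\le C_p\Delta^{p/2}$, this yields a bound of order $(\psi(\Delta))^p\Delta^{p/2}$. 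Finally, because $\Delta\in(0,1]$ gives $\Delta^p\le\Delta^{p/2}$, both contributions are absorbed into $\Delta^{p/2}(\psi(\Delta))^p$, and gathering the $p$-dependent constants into a single $c_p$ finishes the proof.

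This argument is essentially routine once the reduction to a single Brownian increment is in hand; the only delicate point is verifying that the delayed argument $\bar{x}_\Delta(s-\tau)$ is genuinely frozen on $[t_k,t)$, which hinges on the commensurability $\tau=M\Delta$ built into the scheme. Were this alignment unavailable, one would instead apply the Burkholder--Davis--Gundy inequality to a truly stochastic integrand---still manageable, but less transparent. I anticipate no real obstacle beyond keeping track of the constants.
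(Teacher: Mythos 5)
Your proof is correct. It shares the paper's skeleton --- localize to the subinterval $[t_k,t]$, split the drift and stochastic parts, and invoke the uniform bound \eqref{sec4:eq:3} --- but it handles the stochastic term by a genuinely different and more elementary mechanism. The paper treats both integrands as generic bounded processes: it applies the H\"older inequality to the drift integral and the standard $L^p$ moment inequality for It\^{o} integrals (a Burkholder--Davis--Gundy-type bound) to the stochastic integral, arriving at $\Delta^{p-1}\mathbb{E}\int_{t_k}^t|f_{\Delta}|^p\,ds$ and $\Delta^{(p-2)/2}\mathbb{E}\int_{t_k}^t|g_{\Delta}|^p\,ds$ before inserting $\psi(\Delta)$. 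You instead exploit the piecewise-constant structure: since $\tau=M\Delta$ aligns the delay with the grid, both arguments of $g_{\Delta}$ are frozen on $[t_k,t)$, so the It\^{o} integral collapses to an $\mathcal{F}_{t_k}$-measurable constant times the increment $B(t)-B(t_k)$, and Gaussian moments plus conditioning finish the job with no martingale inequality at all. What each approach buys: yours is more transparent and self-contained, and it computes the diffusion contribution exactly rather than through a general inequality; the paper's is more robust, in that it never needs the delayed argument to be frozen --- only the pointwise bound $g_{\Delta}\le\psi(\Delta)$ matters --- so it would survive verbatim if the delay were not commensurate with the step size, whereas your reduction is the one place where that alignment is genuinely load-bearing (as you correctly flag, you would then have to fall back on Burkholder--Davis--Gundy, i.e.\ on the paper's route). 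One further small remark: even within your argument, the independence/conditioning step is dispensable, since $|g_{\Delta}|\le\psi(\Delta)$ holds pathwise and can be pulled out of the expectation directly.
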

\begin{proof}
Fix any $\Delta\in (0,\Delta^*]$ and $t\ge 0$. Then there is a unique integer $k\ge 0$ such that $t_k\le t\le t_{k+1}$. By elementary inequality and \eqref{sec4:eq:3}, we have
\begin{align*}
\mathbb{E}\vert x_{\Delta}(t)-\bar{x}_{\Delta}(t)\vert^p&=\mathbb{E}\vert x_{\Delta}(t)-\bar{x}_{\Delta}(t_k)\vert^p\\
&\le C_p\Big(\mathbb{E}\big\vert\int_{t_k}^tf_{\Delta}(\bar{x}_{\Delta}(s))ds\big\vert^p+\mathbb{E}\big\vert\int_{t_k}^tg_{\Delta}(\bar{x}_{\Delta}(s),\bar{x}_{\Delta}(s-\tau))dB(s)\big\vert^p\Big)\\
&\le C_p\Big(\Delta^{p-1}\mathbb{E}\int_{t_k}^t\vert f_{\Delta}(\bar{x}_{\Delta}(s))\vert^p ds+\Delta^{\frac{p-2}{2}}\mathbb{E}\int_{t_k}^t\vert g_{\Delta}(\bar{x}_{\Delta}(s),\bar{x}_{\Delta}(s-\tau))\vert^p ds\Big)\\
&\le C_p\Big(\Delta^{p-1}(\psi(\Delta))^p\Delta+\Delta^{\frac{p-2}{2}}(\psi(\Delta))^p\Delta\Big)\\
&\le C_p\Big(\Delta^{p}(\psi(\Delta))^p+\Delta^{\frac{p}{2}}(\psi(\Delta))^p\Big)\\
&\le c_p\Delta^{\frac{p}{2}}(\psi(\Delta))^p,
\end{align*}
where $c_p=C_p\vee 1$ and from \eqref{sec4:eq:2}, we obtain $\Delta^{\frac{p}{2}}(\psi(\Delta))^p\le \Delta^{\frac{p}{4}}$.
\end{proof}
The following lemma reveals the probability that the TEM solutions do not explode in finite time. 
\begin{lemma}\label{sub3}
Let Assumptions \ref{sec2:assump:1} and \ref{sec2:assump:2}  hold and $T>0$ be fixed. For any sufficiently large integer $k>0$, define the stopping time by
\begin{equation}\label{sec5:eq:tau}
\eta_{\Delta}=\inf\{t\in [0,T]:x_{\Delta}(t)\notin [1/k,k]\}.
\end{equation}
Then for any fixed $\Delta\in(0,\Delta^*]$, we have
\begin{equation}\label{probability2}
 \mathbb{P}(\eta_{\Delta}\leq T)\leq \frac{V(\xi(0))+K_1T+K_3D\Delta^{\ell}+c_p(K_2+K_3)\Delta^{1/2}\psi(\Delta)T}{V(1/k)\wedge V(k)},
\end{equation}
where  $K_1$, $K_2$ and $K_3$ are generic constants and $V$ is defined in \eqref{sec2:eq1}.
\end{lemma}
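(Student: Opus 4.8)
The plan is to run a Lyapunov argument with the function $V(x)=x^{\beta}-1-\beta\log(x)$ from \eqref{sec2:eq1} applied to the continuous-time process $x_{\Delta}$, and then convert the resulting moment bound into the exit probability \eqref{probability2} by a Chebyshev-type inequality. First I would record the geometric consequence of the stopping time: since $\eta_{\Delta}$ in \eqref{sec5:eq:tau} is defined through $x_{\Delta}$ itself, for every $0\le s<\eta_{\Delta}$ we have $x_{\Delta}(s)\in[1/k,k]$, and consequently the step values $\bar{x}_{\Delta}(s)$ and, for $s\ge\tau$, $\bar{x}_{\Delta}(s-\tau)$ also lie in $[1/k,k]$, while for $s<\tau$ the delayed argument lies in the range of the (bounded) discretised initial data, which is contained in $[1/k,k]$ for $k$ large. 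On this compact subset of $\mathbb{R}_+$ the truncation is inactive for the step sizes considered, so $f_{\Delta},g_{\Delta}$ coincide with $f,g$, and Lemma \ref{lemma1} supplies a single local Lipschitz constant $G_R$ with $R=k$.

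Next I would apply the It\^o formula \eqref{eq2:3} to $V(x_{\Delta}(t\wedge\eta_{\Delta}))$ via the dynamics \eqref{eq:30} and take expectations to kill the martingale term, giving
\[
\mathbb{E}V(x_{\Delta}(t\wedge\eta_{\Delta}))=V(\xi(0))+\mathbb{E}\int_0^{t\wedge\eta_{\Delta}}\Big[V_x(x_{\Delta}(s))f_{\Delta}(\bar{x}_{\Delta}(s))+\tfrac12 V_{xx}(x_{\Delta}(s))g_{\Delta}(\bar{x}_{\Delta}(s),\bar{x}_{\Delta}(s-\tau))^2\Big]ds.
\]
The key algebraic step is to rewrite the integrand by \emph{matching the arguments of the derivatives of $V$ with $x_{\Delta}$}, splitting off the true generator $LV(x_{\Delta}(s),x_{\Delta}(s-\tau))$ and a remainder $R(s)$ that collects $V_x(x_{\Delta})[f_{\Delta}(\bar{x}_{\Delta})-f(x_{\Delta})]$ and $\tfrac12 V_{xx}(x_{\Delta})[g_{\Delta}(\bar{x}_{\Delta},\bar{x}_{\Delta}(\cdot-\tau))^2-g(x_{\Delta},x_{\Delta}(\cdot-\tau))^2]$. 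For $s<\eta_{\Delta}$ the leading term is controlled by $LV\le K_1$, which is precisely \eqref{last} with $K_1=K_0$ (crucially independent of $k$), so integrating over $[0,t\wedge\eta_\Delta]\subseteq[0,T]$ produces the $K_1T$ contribution.

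The bulk of the work is estimating $\mathbb{E}\int_0^{t\wedge\eta_\Delta}|R(s)|\,ds$. Here I would use that $V_x,V_{xx}$ are bounded on $[1/k,k]$, that on this set $f_\Delta=f$ and $g_\Delta=g$, and Lemma \ref{lemma1} to pass to $|f(\bar x_\Delta)-f(x_\Delta)|\le G_R|\bar x_\Delta-x_\Delta|$ and, after factoring the difference of squares, $|g(\bar x_\Delta,\bar x_\Delta(\cdot-\tau))-g(x_\Delta,x_\Delta(\cdot-\tau))|\le G_R(|\bar x_\Delta-x_\Delta|+|\bar x_\Delta(\cdot-\tau)-x_\Delta(\cdot-\tau)|)$. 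Because on $[1/k,k]$ both $g_\Delta$ and $g$ reduce to $g$ at bounded arguments, the factor $g_\Delta+g$ is bounded by a ($k$-dependent) constant rather than by $\psi(\Delta)$, so no extra power of $\psi(\Delta)$ is produced. The displacement $\mathbb{E}|x_\Delta(s)-\bar x_\Delta(s)|$ and, for $s\ge\tau$, its delayed analogue are bounded by $c_p\Delta^{1/2}\psi(\Delta)$ through Lemma \ref{sec5:eq:L1}, giving the $c_p(K_2+K_3)\Delta^{1/2}\psi(\Delta)T$ term; for $s<\tau$ the delayed displacement compares the piecewise-constant initial data with $\xi(s-\tau)$, and Assumption \ref{sec2:assump:2} bounds this by $D\Delta^\ell$, which is the origin of the $K_3D\Delta^\ell$ term.

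Finally I would close with the standard exit argument: by continuity of the It\^o process $x_\Delta$, on $\{\eta_\Delta\le T\}$ we have $x_\Delta(\eta_\Delta)\in\{1/k,k\}$, hence $V(x_\Delta(\eta_\Delta))\ge V(1/k)\wedge V(k)$, so $\mathbb{E}V(x_\Delta(T\wedge\eta_\Delta))\ge (V(1/k)\wedge V(k))\,\mathbb{P}(\eta_\Delta\le T)$; combining with the moment bound and dividing yields \eqref{probability2}. I expect the main obstacle to be the remainder estimate: organising the decomposition so that only a single power of $\psi(\Delta)$ survives (which forces matching the $V$-derivative arguments to $x_\Delta$ and exploiting that the truncation is inactive on $[1/k,k]$), and splitting the delay term at $s=\tau$ so that Assumption \ref{sec2:assump:2} can absorb the initial-segment contribution as $D\Delta^\ell$. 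I should also track that $K_1$ is $k$-independent while $K_2,K_3$ may depend on $k$, since only $K_1$ must survive the later limit $\Delta\to0$ followed by $k\to\infty$.
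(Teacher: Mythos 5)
Your proposal is correct and follows essentially the same route as the paper's proof: the It\^{o} formula applied to $V(x_{\Delta}(t\wedge\eta_{\Delta}))$, the decomposition into the argument-matched generator term bounded by $LV\le K_1$ plus a remainder handled via the inactive truncation on $[1/k,k]$, Lemma \ref{lemma1}, the difference-of-squares factoring with the $z(k)$-bounded factor, Lemma \ref{sec5:eq:L1} for the displacement terms, Assumption \ref{sec2:assump:2} for the initial segment, and the final Chebyshev-type exit estimate. You even make explicit two points the paper leaves implicit, namely the split of the delayed integral at $s=\tau$ and the $k$-independence of $K_1$ versus the $k$-dependence of $K_2,K_3$.
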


\begin{proof}
For $t_1\in[0,T]$, we apply the It\^{o} formula to \eqref{eq:30} to compute
\begin{align*}
&\mathbb{E}(V(x_{\Delta}(t\wedge \eta_{\Delta})))-V(\xi(0))\\
&=\mathbb{E}\int_{0}^{t_1\wedge \eta_{\Delta}}\Big(V_x(x_{\Delta}(s))f_{\Delta}(\bar{x}_{\Delta}(s))+\frac{1}{2}V_{xx}(x_{\Delta}(s))g_{\Delta}(\bar{x}_{\Delta}(s),\bar{x}_{\Delta}(s-\tau))^2\Big)ds\\
&\le\mathbb{E}\int_{0}^{\eta_{\Delta}\wedge t_1}\Big(V_x(x_{\Delta}(s))f_{\Delta}(x_{\Delta}(s))+\frac{1}{2}V_{xx}(x_{\Delta}(s))g_{\Delta}(x_{\Delta}(s),x_{\Delta}(s-\tau))^2\Big)ds\\
&+\mathbb{E}\int_{0}^{\eta_{\Delta}\wedge t_1}V_x(x_{\Delta}(s))\Big(f_{\Delta}(\bar{x}_{\Delta}(s))-f_{\Delta}(x_{\Delta}(s))\Big)ds\\
&+\mathbb{E}\int_{0}^{\eta_{\Delta}\wedge t_1}\frac{1}{2}V_{xx}(x_{\Delta}(s))\Big(g_{\Delta}(\bar{x}_{\Delta}(s),\bar{x}_{\Delta}(s-\tau))^2-g_{\Delta}(x_{\Delta}(s),x_{\Delta}(s-\tau))^2\Big)ds.
\end{align*}
By recalling the definition of the truncated functions in \eqref{sec4:eq:1}, we note that
\begin{equation}\label{sec5:eq:recall0}
f_{\Delta}(\cdot)=f(\cdot)\text{ and } g_{\Delta}(\cdot,\cdot)=g(\cdot,\cdot).
\end{equation}
Also, for  $s\in[0,\eta_{\Delta}\wedge t_1]$ with $x_{\Delta}(s),\bar{x}_{\Delta}(s),x_{\Delta}(s-\tau),\bar{x}_{\Delta}(s-\tau)\in[1/k,k]$, we observe that
\begin{equation}\label{sec5:eq:recall1}
g(\bar{x}_{\Delta}(s),\bar{x}_{\Delta}(s-\tau))\vee g(x_{\Delta}(s),x_{\Delta}(s-\tau))\le z(k).
\end{equation}
By Assumption \ref{sec2:assump:2}, we have 
\begin{align*}
&\mathbb{E}(V(x_{\Delta}(t\wedge \eta_{\Delta})))-V(\xi(0))\le K_1T+\mathbb{E}\int_{0}^{\eta_{\Delta}\wedge t_1}V_x(x_{\Delta}(s))\vert f(\bar{x}_{\Delta}(s))-f(x_{\Delta}(s))\vert ds\\
&+\mathbb{E}\int_{0}^{\eta_{\Delta}\wedge t_1}\frac{1}{2}V_{xx}(x_{\Delta}(s))\Big(g(\bar{x}_{\Delta}(s),\bar{x}_{\Delta}(s-\tau))^2-g(x_{\Delta}(s),x_{\Delta}(s-\tau))^2\Big)ds,
\end{align*}
where $LV(x_{\Delta}(s),x_{\Delta}(s-\tau))\le K_1$ for $s\in[0,t\wedge \eta_{\Delta}]$. By  an elementary inequality, we have
\begin{align*}
&\mathbb{E}(V(x_{\Delta}(t\wedge \eta_{\Delta})))-V(\xi(0))\le K_1T+\mathbb{E}\int_{0}^{\eta_{\Delta}\wedge t_1}V_x(x_{\Delta}(s))\vert f(\bar{x}_{\Delta}(s))-f(x_{\Delta}(s))\vert ds\\
&+\mathbb{E}\int_{0}^{\eta_{\Delta}\wedge t_1}\frac{1}{2}V_{xx}(x_{\Delta}(s))\Big(\vert g(\bar{x}_{\Delta}(s),\bar{x}_{\Delta}(s-\tau))-g(x_{\Delta}(s),x_{\Delta}(s-\tau))\vert\\
&\times \vert g(\bar{x}_{\Delta}(s),\bar{x}_{\Delta}(s-\tau))+g(x_{\Delta}(s),x_{\Delta}(s-\tau))\vert \Big) ds.
\end{align*}
By \eqref{sec4:eq:3}, \eqref{sec5:eq:recall0}, \eqref{sec5:eq:recall1} and Lemma \ref{lemma1}, we now have
\begin{align*}
\mathbb{E}(V(x_{\Delta}(t\wedge \eta_{\Delta})))&\le V(\xi(0))+K_1T+\mathbb{E}\int_{0}^{t_1\wedge\eta_{\Delta}}G_kV_x(x_{\Delta}(s))\vert \bar{x}_{\Delta}(s)-x_{\Delta}(s)\vert ds\\
&+\mathbb{E}\int_{0}^{\eta_{\Delta}\wedge t_1}(z(k))^2G_kV_{xx}(x_{\Delta}(s))\vert \bar{x}_{\Delta}(s)-x_{\Delta}(s)\vert ds\\
&+\mathbb{E}\int_{0}^{\eta_{\Delta}\wedge t_1}(z(k))^2G_kV_{xx}(x_{\Delta}(s))\vert \bar{x}_{\Delta}(s-\tau)-x_{\Delta}(s-\tau)\vert ds\\
&\le V(\xi(0))+K_1T+K_2\mathbb{E}\int_0^{\eta_{\Delta}\wedge t_1} \vert \bar{x}_{\Delta}(s)-x_{\Delta}(s)\vert ds\\
&+K_3\mathbb{E}\int_{0}^{\eta_{\Delta}\wedge t_1}\vert \bar{x}_{\Delta}(s-\tau)-x_{\Delta}(s-\tau)\vert ds,
\end{align*}
where
\begin{equation*}
K_2=\max_{1/k\le x\le k}\Big(G_kV_x(x)+(z(k))^2G_kV_{xx}(x)\Big)
\end{equation*}
and
\begin{equation*}
K_3=\max_{1/k\le x\le k}\Big((z(k))^2G_kV_{xx}(x)\Big).
\end{equation*}
So by Assumption \ref{sec2:assump:1} and Lemma \ref{sec5:eq:L1}, we get
\begin{align*}
\mathbb{E}(V(x_{\Delta}(t\wedge \eta_{\Delta})))&\le V(\xi(0))+K_1T+K_2\mathbb{E}\int_0^{\eta_{\Delta}\wedge t_1} \vert \bar{x}_{\Delta}(s)-x_{\Delta}(s)\vert ds\\
&+K_3\mathbb{E}\int_{-\tau}^{\eta_{\Delta}\wedge t_1}\vert \bar{x}_{\Delta}(s)-x_{\Delta}(s)\vert ds\\
&\le V(\xi(0))+K_1T+
K_2\mathbb{E}\int_{0}^{\eta_{\Delta}\wedge t_1} \vert \bar{x}_{\Delta}(s)-x_{\Delta}(s)\vert ds\\
&+K_3\mathbb{E}\int_{-\tau}^{0}|\xi([s/\Delta]\Delta)-\xi(s)|ds+K_3\mathbb{E}\int_{0}^{\eta_{\Delta}\wedge t_1}\vert \bar{x}_{\Delta}(s)-x_{\Delta}(s)\vert ds\\
&\le  V(\xi(0))+K_1T+K_3\int_{-\tau}^0\mathbb{E}|\xi([s/\Delta]\Delta)-\xi(s)|ds\\
&+(K_2+K_3)\int_{0}^T(\mathbb{E}\vert \bar{x}_{\Delta}(s)-x_{\Delta}(s)\vert^p)^{1/p}ds\\
&\le V(\xi(0))+K_1T+K_3D\Delta^{\ell}+c_p(K_2+K_3)\Delta^{1/2}\psi(\Delta)T.
\end{align*}
This implies that 
\begin{equation*}
 \mathbb{P}(\eta_{\Delta}\leq T)\leq \frac{V(\xi(0))+K_1T+K_3D\Delta^{\ell}+c_p(K_2+K_3)\Delta^{1/2}\psi(\Delta)T}{V(1/k)\wedge V(k)},
\end{equation*}
as required.
\end{proof}
\section{Convergence analysis}\label{sect5}
In this section, we study the  finite-time convergence of the TEM solutions to the true solution of SDDE \eqref{cir4}. Further, we show that the TEM solutions converge  to the true solution of SDDE \eqref{cir4} in probability. We perform simulation examples to support the findings and justify the convergence result for valuing some option contracts.
\subsection{Finite-time error bound}
The following lemma shows that the TEM solutions converge to the true solution of SDDE \eqref{cir4} in finite time. 
\begin{lemma}\label{eq:l8}
Let Assumptions \ref{sec2:assump:1} and \ref{sec2:assump:2} hold. Then for any $p\geq 2$, fixed $T> 0$, sufficiently large $k>0$ and $\Delta\in (0,\Delta^*]$,  we have
\begin{equation}\label{eq:37}
\mathbb{E}\Big( \sup_{0\leq t \leq T}|x_{\Delta}(t\wedge\eta^{\Delta}_k)-x(t \wedge \eta^{\Delta}_k)|^p  \Big)\le K_8(\Delta^{\ell}\vee\Delta^{\frac{p}{2}}(\psi(\Delta))^p),
\end{equation}
where $K_8$ is a generic constant that depends on $k$ but is independent of $\Delta$ and $\eta^{\Delta}_k=\eta_k\wedge \eta_{\Delta}$, where $\eta_k$ and $\eta_{\Delta}$ are defined in \eqref{stoptime1} and \eqref{sec5:eq:tau} respectively.  Consequently, we have
\begin{equation}\label{eq:38}
\lim_{\Delta\rightarrow 0}\mathbb{E}\Big( \sup_{0\leq t \leq T}|\bar{x}_{\Delta}(t \wedge \eta^{\Delta}_k)-x(t \wedge \eta^{\Delta}_k)|^p \Big)=0.
\end{equation}
\end{lemma}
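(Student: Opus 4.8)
The plan is to localise by the stopping time $\eta^\Delta_k=\eta_k\wedge\eta_\Delta$, derive a Gronwall-type inequality for the stopped $p$-th moment of the error, and then close it. First I would subtract \eqref{cir4} from \eqref{TEM2} in integral form and evaluate at $t\wedge\eta^\Delta_k$, giving
\[
x_\Delta(t\wedge\eta^\Delta_k)-x(t\wedge\eta^\Delta_k)=\int_0^{t\wedge\eta^\Delta_k}\!\big(f_\Delta(\bar x_\Delta(s))-f(x(s))\big)\,ds+\int_0^{t\wedge\eta^\Delta_k}\!\big(g_\Delta(\bar x_\Delta(s),\bar x_\Delta(s-\tau))-g(x(s),x(s-\tau))\big)\,dB(s).
\]
On $[0,\eta^\Delta_k]$ the processes $x$ and $x_\Delta$, the grid values defining $\bar x_\Delta$, and the delayed arguments (which for $s\le\tau$ are samplings of $\xi$ and for $s>\tau$ are earlier-time values) all lie in $[1/k,k]$. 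Since $\psi(\Delta)\to\infty$, for $\Delta$ small enough that $z^{-1}(\psi(\Delta))\ge k$ the truncation is inactive on this range, so $f_\Delta=f$ and $g_\Delta=g$ there and Lemma \ref{lemma1} applies with the single constant $G_k$.

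Next I would take $\sup_{0\le u\le t}$ and expectations, bounding the drift integral by H\"older's inequality and the stochastic integral by the Burkholder--Davis--Gundy inequality, so both reduce to $\int_0^t\mathbb{E}\big[G_k^p(|\bar x_\Delta(s)-x(s)|+|\bar x_\Delta(s-\tau)-x(s-\tau)|)^p\big]\,ds$ up to constants depending on $p,T$. Inserting $x_\Delta$ by the triangle inequality, $|\bar x_\Delta(s)-x(s)|\le|\bar x_\Delta(s)-x_\Delta(s)|+|x_\Delta(s)-x(s)|$, splits off the step-versus-continuous discrepancy, whose $p$-th moment is $\le c_p\Delta^{p/2}(\psi(\Delta))^p$ by Lemma \ref{sec5:eq:L1}, while $\mathbb{E}|x_\Delta(s)-x(s)|^p$ on the stopped interval is absorbed into the running supremum and fed to Gronwall.

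The delicate point, which I expect to be the main obstacle, is the delayed term $|\bar x_\Delta(s-\tau)-x(s-\tau)|$. I would substitute $u=s-\tau$ and split the integral at $u=0$: on $[-\tau,0]$ both processes are samplings of $\xi$ at points at most $\Delta$ apart, so Assumption \ref{sec2:assump:2} bounds this piece by a constant times $\Delta^{\ell p}\le\Delta^{\ell}$ (using $\Delta<1$, $p\ge2$); on $[0,T-\tau]$ the integrand is again split as above and absorbed by Lemma \ref{sec5:eq:L1} and the running supremum. Collecting terms yields
\[
\mathbb{E}\sup_{0\le u\le t}|x_\Delta(u\wedge\eta^\Delta_k)-x(u\wedge\eta^\Delta_k)|^p\le C_1\int_0^t\mathbb{E}\sup_{0\le u\le s}|x_\Delta(u\wedge\eta^\Delta_k)-x(u\wedge\eta^\Delta_k)|^p\,ds+C_2\big(\Delta^{\ell}\vee\Delta^{p/2}(\psi(\Delta))^p\big),
\]
with $C_1,C_2$ depending on $p,T,k$ but not on $\Delta$; the Gronwall inequality then gives \eqref{eq:37} with $K_8=C_2e^{C_1T}$.

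Finally, for the consequence \eqref{eq:38} I would use $|\bar x_\Delta(t\wedge\eta^\Delta_k)-x(t\wedge\eta^\Delta_k)|\le|\bar x_\Delta(t\wedge\eta^\Delta_k)-x_\Delta(t\wedge\eta^\Delta_k)|+|x_\Delta(t\wedge\eta^\Delta_k)-x(t\wedge\eta^\Delta_k)|$. The second term vanishes in $L^p$ by \eqref{eq:37}, using $\Delta^{p/2}(\psi(\Delta))^p\le\Delta^{p/4}\to0$ from \eqref{sec4:eq:2} and $\Delta^{\ell}\to0$. For the first term one needs a supremum version of Lemma \ref{sec5:eq:L1}; this is obtained from the same Burkholder--Davis--Gundy estimate, since on the stopped interval the coefficients are bounded by the $\Delta$-independent constant $z(k)$, giving a bound uniform in $t$ that still tends to zero as $\Delta\to0$.
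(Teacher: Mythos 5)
Your proposal follows essentially the same route as the paper's proof: localisation by $\eta^{\Delta}_k$, splitting the error into drift and diffusion parts estimated by the H\"older and Burkholder--Davis--Gundy inequalities, the local Lipschitz bound of Lemma \ref{lemma1} with constant $G_k$, insertion of $x_{\Delta}$ via the triangle inequality so that Lemma \ref{sec5:eq:L1} absorbs the step-versus-continuous discrepancy, treatment of the delayed term by shifting to $[-\tau,0]$ and invoking Assumption \ref{sec2:assump:2} there, and finally Gronwall. The only differences are cosmetic: you are slightly more explicit than the paper about why the truncation is inactive on the stopped interval (choosing $\Delta$ with $z^{-1}(\psi(\Delta))\ge k$) and about the supremum-in-$t$ version of Lemma \ref{sec5:eq:L1} needed to deduce \eqref{eq:38}, a point the paper passes over quickly.
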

\begin{proof}
It follows from \eqref{eq2:1} and \eqref{eq:30} that for $t\in [0,t_1]$, we have
\begin{align*}
\mathbb{E}\Big(\sup_{0\leq t \leq t_1}|x_{\Delta}(t\wedge\eta^{\Delta}_k)-x(t\wedge\eta^{\Delta}_k)|^p\Big)\le K_4+K_5,
\end{align*}
 where
\begin{align*} 
K_4&=2^{p-1}\Big( \mathbb{E}\Big|\int_{0}^{t_1\wedge \eta^{\Delta}_k}(f_{\Delta}(\bar{x}_{\Delta}(s))-f(x(s)))ds\Big|^p\Big)\\
K_5&=2^{p-1}\Big(\mathbb{E}(\sup_{0\leq t \leq t_1}\Big|\int_{0}^{t_1\wedge \eta^{\Delta}_k)}(g_{\Delta}(\bar{x}_{\Delta}(s),\bar{x}_{\Delta}(s-\tau))-g(x(s),x(s-\tau)))dB(s)\Big|^p)\Big).
\end{align*}
By the H\"older inequality and \eqref{sec5:eq:recall0}, we compute
\begin{align*} 
K_4&\le 2^{p-1}T^{p-1}\Big( \mathbb{E}\int_{0}^{t_1\wedge \eta^{\Delta}_k}|f_{\Delta}(\bar{x}_{\Delta}(s))-f(x(s))|^pds\Big)\\
&\le 2^{p-1}T^{p-1}\Big(\mathbb{E}\int_{0}^{t_1\wedge \eta^{\Delta}_k}|f(\bar{x}_{\Delta}(s))-f(x(s))|^pds\Big)\\
&\le 2^{p-1}T^{p-1}G_k^p\mathbb{E}\int_{0}^{t_1\wedge \eta^{\Delta}_k}|\bar{x}_{\Delta}(s)-x(s)|^pds.
\end{align*}
Moreover, by the elementary inequality, we now have 
\begin{align}\label{grown0}
K_4&\le c_0\mathbb{E}\int_{0}^{t_1\wedge \eta^{\Delta}_k}|\bar{x}_{\Delta}(s)-x_{\Delta}(s)|^pds+c_0\mathbb{E}\int_{0}^{t_1\wedge \eta^{\Delta}_k}|x_{\Delta}(s)-x(s)|^pds\nonumber\\
&\le c_0\int_{0}^{T}\mathbb{E}|\bar{x}_{\Delta}(s)-x_{\Delta}(s)|^pds+c_0\int_{0}^{t_1}\mathbb{E}\Big(\sup_{0\le t\le s}|x_{\Delta}(t\wedge \eta^{\Delta}_k)-x(t\wedge \eta^{\Delta}_k)|^p\Big)ds,
\end{align}
where $c_0=2^{2(p-1)}T^{p-1}G_k^p$. By the Burkholder-Davis-Gundy inequality, Lemma \ref{lemma1} and \eqref{sec5:eq:recall0}, we also have 
\begin{align*}
K_5&\le 2^{p-1}T^{\frac{p-2}{2}} \bar{c}_p\Big(\mathbb{E}\int_{0}^{t_1\wedge \eta^{\Delta}_k}\vert g_{\Delta}(\bar{x}_{\Delta}(s),\bar{x}_{\Delta}(s-\tau))-g(x(s),x(s-\tau))\vert^pds \Big)\\
&\le 2^{p-1}T^{\frac{p-2}{2}} \bar{c}_p\Big(\mathbb{E}\int_{0}^{t_1\wedge \eta^{\Delta}_k}\vert g(\bar{x}_{\Delta}(s),\bar{x}_{\Delta}(s-\tau))-g(x(s),x(s-\tau))\vert^pds \Big)
\end{align*}
where $\bar{c}_p$ is a positive constant. By the elementary inequality and Lemma \ref{lemma1}, we get
\begin{align*}
K_5&\le 2^{2(p-1)}T^{\frac{p-2}{2}} \bar{c}_p\Big(\mathbb{E}\int_{0}^{t_1\wedge \eta^{\Delta}_k}\vert g(\bar{x}_{\Delta}(s),\bar{x}_{\Delta}(s-\tau))-g(x_{\Delta}(s),x_{\Delta}(s-\tau))\vert^pds\Big)\\
&+ 2^{2(p-1)}T^{\frac{p-2}{2}} \bar{c}_p\Big(\mathbb{E}\int_{0}^{t_1\wedge \eta^{\Delta}_k}\vert  g(x_{\Delta}(s),x_{\Delta}(s-\tau))-g(x(s),x(s-\tau))\vert^pds\Big)\\
&\le c_1\Big(\mathbb{E}\int_{0}^{t_1\wedge \eta^{\Delta}_k}\vert \bar{x}_{\Delta}(s)-x_{\Delta}(s)\vert^pds\Big)+ c_1\Big(\mathbb{E}\int_{0}^{t_1\wedge \eta^{\Delta}_k}\vert \bar{x}_{\Delta}(s-\tau)-x_{\Delta}(s-\tau)\vert^pds\Big)\\
&+c_1\Big(\mathbb{E}\int_{0}^{t_1\wedge \eta^{\Delta}_k}\vert x_{\Delta}(s)-x(s)\vert^pds\Big)+c_1\Big(\mathbb{E}\int_{0}^{t_1\wedge \eta^{\Delta}_k}\vert x_{\Delta}(s-\tau)-x(s-\tau)\vert^pds\Big),
\end{align*}
where $c_1=2^{3(p-1)}T^{\frac{p-2}{2}} \bar{c}_pG^p_k$. This also means that 
\begin{align}\label{grown1}
K_5&\le c_1\int_{0}^T\mathbb{E}\vert \bar{x}_{\Delta}(s)-x_{\Delta}(s)\vert^pds+c_1\int_{-\tau}^0\mathbb{E}|\xi([s/\Delta]\Delta)-\xi(s)|^pds\nonumber\\
&+c_1\int_{0}^T\mathbb{E}\vert \bar{x}_{\Delta}(s)-x_{\Delta}(s)\vert^pds+ c_1\Big(\mathbb{E}\int_{0}^{t_1\wedge \eta^{\Delta}_k}\vert x_{\Delta}(s)-x(s)\vert^pds\Big)\nonumber\\
&+c_1\int_{-\tau}^0\mathbb{E}|\xi([s/\Delta]\Delta)-\xi(s)|ds+c_1\Big(\mathbb{E}\int_{0}^{t_1\wedge \eta^{\Delta}_k}\vert x_{\Delta}(s)-x(s)\vert^pds\Big)\nonumber\\
&\le 2c_1\int_{-\tau}^0\mathbb{E}|\xi([s/\Delta]\Delta)-\xi(s)|^pds
+2c_1\int_{0}^T\mathbb{E}\vert \bar{x}_{\Delta}(s)-x_{\Delta}(s)\vert^pds\nonumber\\
&+2c_1 \int_{0}^{t_1}\mathbb{E}\Big(\sup_{0\le t\le s}|x_{\Delta}(t\wedge \eta^{\Delta}_k)-x(t\wedge \eta^{\Delta}_k)|^p\Big)ds.
\end{align}
By combining $K_4$ and $K_5$, that is \eqref{grown0} and \eqref{grown1}, we now have
\begin{align*}
&\mathbb{E}\Big( \sup_{0\leq t \leq T}|x_{\Delta}(t\wedge\eta^{\Delta}_k)-x(t \wedge \eta^{\Delta}_k)|^p  \Big)\le 2c_1\int_{-\tau}^0\mathbb{E}|\xi([s/\Delta]\Delta)-\xi(s)|^pds\\
&+(c_0+2c_1)\int_{0}^{T}\mathbb{E}|\bar{x}_{\Delta}(s)-x_{\Delta}(s)|^pds+(c_0+2c_1)\int_{0}^{t_1}\mathbb{E}\Big(\sup_{0\le t\le s}|x_{\Delta}(t\wedge \eta^{\Delta}_k)-x(t\wedge \eta^{\Delta}_k)|^p\Big)ds.
\end{align*}
So by  Assumption \ref{sec2:assump:1}, Lemma \ref{sec5:eq:L1} and the Gronwall inequality, we obtain the required assertion as
\begin{align*}
&\mathbb{E}\Big( \sup_{0\leq t \leq T}|x_{\Delta}(t\wedge\eta^{\Delta}_k)-x(t \wedge \eta^{\Delta}_k)|^p  \Big)\\
&\le K_6(\Delta^{\ell}\vee\Delta^{\frac{p}{2}}(\psi(\Delta))^p)+K_7\int_{0}^{t_1}\mathbb{E}\Big(\sup_{0\le t\le s}|x_{\Delta}(t\wedge \eta^{\Delta}_k)-x(t\wedge \eta^{\Delta}_k)|^p\Big)ds\\
&\le  K_8(\Delta^{\ell}\vee\Delta^{\frac{p}{2}}(\psi(\Delta))^p),
\end{align*}
where $K_6=2c_1D+c_p(c_0+2c_1)$, $K_7=c_0+2c_1$ and $K_8=K_6e^{K_7}$. Moreover,  by Lemma \ref{sec5:eq:L1}, we also get \eqref{eq:38} by letting $\Delta\rightarrow 0$.
\end{proof}

\subsection{Convergence in probability}
The following theorem shows that the TEM solutions converge to the true solution of SDDE \eqref{eq2:1} in probability.
\begin{theorem}\label{final}
Let  $x(t)$ and $x_{\Delta}(t)$ be the true solution and the truncated EM solution of \eqref{eq2:1} and \eqref{eq:30} respectively. Then for any fixed $T>0$, $\Delta\in (0,\Delta^*]$ and $p\ge 2$, we have
\begin{equation}\label{converge1}
\lim_{\Delta\rightarrow 0}\Big( \sup_{0\leq t \leq T}|x_{\Delta}(t)-x(t)|^p\Big)=0\text{ in probability}.
\end{equation}
and consequently 
\begin{equation}\label{converge2}
\lim_{\Delta\rightarrow 0}\Big( \sup_{0\leq t \leq T}|\bar{x}_{\Delta}(t)-x(t)|^p\Big)=0\text{ in probability},
\end{equation}
where $\bar{x}_{\Delta}(t)$ is defined in \eqref{TEM1}.
\end{theorem}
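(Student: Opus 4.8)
The plan is to use the standard stopping-time localisation argument for convergence in probability, combining the finite-time moment bound on the stopped processes (Lemma \ref{eq:l8}) with the two exit-probability estimates already established. Fixing arbitrary $\epsilon>0$ and $\delta>0$, the goal is to show $\mathbb{P}\big(\sup_{0\le t\le T}|x_\Delta(t)-x(t)|^p>\delta\big)<\epsilon$ for all sufficiently small $\Delta$, which is precisely \eqref{converge1}.

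The first step is to decompose the event over the localising stopping time $\eta^\Delta_k=\eta_k\wedge\eta_\Delta$. Since $\{\eta^\Delta_k\le T\}=\{\eta_k\le T\}\cup\{\eta_\Delta\le T\}$, a union bound gives
\begin{align*}
\mathbb{P}\Big(\sup_{0\le t\le T}|x_\Delta(t)-x(t)|^p>\delta\Big)&\le\mathbb{P}\Big(\Big\{\sup_{0\le t\le T}|x_\Delta(t)-x(t)|^p>\delta\Big\}\cap\{\eta^\Delta_k>T\}\Big)\\
&\quad+\mathbb{P}(\eta_k\le T)+\mathbb{P}(\eta_\Delta\le T).
\end{align*}
On the event $\{\eta^\Delta_k>T\}$ one has $t\wedge\eta^\Delta_k=t$ for every $t\le T$, so $x_\Delta(t)=x_\Delta(t\wedge\eta^\Delta_k)$ and $x(t)=x(t\wedge\eta^\Delta_k)$; Markov's inequality together with Lemma \ref{eq:l8} then bounds the first term by $\delta^{-1}K_8(\Delta^\ell\vee\Delta^{p/2}(\psi(\Delta))^p)$.

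The decisive step is to choose the two parameters $k$ and $\Delta$ in the correct order, and this is where the main obstacle lies. Because $K_8$ in Lemma \ref{eq:l8} and the constants $K_2,K_3$ in Lemma \ref{sub3} depend on $k$ (through $G_k$ and $z(k)$) and blow up as $k\to\infty$, one cannot let $\Delta\to0$ uniformly in $k$. The resolution is to control the two exit probabilities first by a $\Delta$-independent choice of $k$: by \eqref{probability1} we have $\mathbb{P}(\eta_k\le T)\le(V(\xi(0))+K_0T)/(V(1/k)\wedge V(k))$, and the crucial observation is that the constant $K_1$ in Lemma \ref{sub3} is merely a bound on $LV$ and may therefore be taken equal to the \emph{global} constant $K_0$ from the existence theorem, independent of $k$. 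Hence, letting $\Delta\to0$ first with $k$ held fixed, the $\Delta^\ell$ and $\Delta^{1/2}\psi(\Delta)$ terms in \eqref{probability2} vanish (note $\Delta^{1/2}\psi(\Delta)\le\Delta^{1/4}$ by \eqref{sec4:eq:2}), giving $\limsup_{\Delta\to0}\mathbb{P}(\eta_\Delta\le T)\le(V(\xi(0))+K_1T)/(V(1/k)\wedge V(k))$. Both bounds tend to $0$ as $k\to\infty$, since $V(1/k)\wedge V(k)\to\infty$.

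Assembling the estimates, I would first fix $k$ so large that both $\mathbb{P}(\eta_k\le T)<\epsilon/3$ and $(V(\xi(0))+K_1T)/(V(1/k)\wedge V(k))<\epsilon/3$; with $k$ now fixed, $K_8$ is a fixed constant, so choosing $\Delta$ sufficiently small makes $\delta^{-1}K_8(\Delta^\ell\vee\Delta^{p/2}(\psi(\Delta))^p)<\epsilon/3$ and simultaneously $\mathbb{P}(\eta_\Delta\le T)<\epsilon/3$. Summing the three contributions yields \eqref{converge1}. Finally, for \eqref{converge2} I would repeat the very same decomposition with $\bar x_\Delta$ in place of $x_\Delta$, invoking the limit \eqref{eq:38} instead of the rate \eqref{eq:37} to handle the term on $\{\eta^\Delta_k>T\}$; since \eqref{eq:38} already furnishes $\mathbb{E}\big(\sup_{0\le t\le T}|\bar x_\Delta(t\wedge\eta^\Delta_k)-x(t\wedge\eta^\Delta_k)|^p\big)\to0$ for each fixed $k$, the identical three-term argument applies and completes the proof.
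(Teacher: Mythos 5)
Your proposal is correct and follows essentially the same route as the paper's own proof: the same union-bound decomposition over $\eta^{\Delta}_k=\eta_k\wedge\eta_{\Delta}$, the same use of Lemma \ref{eq:l8} via a Chebyshev/Markov bound on the event $\{\eta^{\Delta}_k>T\}$, the same exit-probability estimates \eqref{probability1} and \eqref{probability2}, and crucially the same parameter ordering (fix $k$ first, then shrink $\Delta$, exploiting that $K_0,K_1$ are $k$-independent bounds on $LV$ while $K_2,K_3,K_8$ depend on $k$). The only cosmetic difference is in \eqref{converge2}, where you rerun the decomposition with \eqref{eq:38} whereas the paper simply appeals to Lemma \ref{sec5:eq:L1} and the triangle inequality; both are valid.
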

\begin{proof}
For arbitrarily small constants $\epsilon$ and $\lambda$, set
\begin{equation*}
\bar{\Omega}=\Big\{\omega: \sup_{0\leq t \leq T}|x_{\Delta}(t)-x(t)|^p\ge \lambda\Big\}.
\end{equation*}
Then
\begin{align*}
\lambda\mathbb{P}(\bar{\Omega}\cap(\eta^{\Delta}_k\ge T))&=\lambda\mathbb{E}\Big( 1_{(\eta^{\Delta}_k\ge T)} 1_{\bar{\Omega}}\Big)\\
&\le \mathbb{E}\Big( 1_{(\eta^{\Delta}_k\ge T)} \sup_{0\leq t \leq T}|x_{\Delta}(t)-x(t)|^p\Big)\\
&\le \mathbb{E}\Big(\sup_{0\leq t \leq T\wedge \eta^{\Delta}_k}|x_{\Delta}(t)-x(t)|^p\Big)\\
&\le \mathbb{E}\Big(\sup_{0\leq t \leq T}|x_{\Delta}(t\wedge \eta^{\Delta}_k)-x(t\wedge \eta^{\Delta}_k)|^p\Big). 
\end{align*}
By Lemma \ref{eq:l8}, we get
\begin{align}\label{prob3}
\mathbb{P}(\bar{\Omega}\cap(\eta^{\Delta}_k\ge T))\le \frac{K_8(\Delta^{\ell}\vee\Delta^{\frac{p}{2}}(\psi(\Delta))^p)}{\lambda}.
\end{align}
Furthermore, we compute
\begin{align}\label{prob4}
\mathbb{P}(\bar{\Omega})&\le \mathbb{P}(\bar{\Omega}\cap(\eta^{\Delta}_k\ge T))+\mathbb{P}(\eta^{\Delta}_k\le T)\nonumber\\
&\le \mathbb{P}(\bar{\Omega}\cap(\eta^{\Delta}_k\ge T))+\mathbb{P}(\eta_k\le T)+\mathbb{P}(\eta_{\Delta}\le T).
\end{align} 
So, by substituting \eqref{probability1}, \eqref{probability2} and \eqref{prob3} into \eqref{prob4}, we have
\begin{align}
\mathbb{P}(\bar{\Omega})&\le \frac{V(\xi(0))+K_0T}{V(k)\wedge V(1/k)}+ \frac{K_8(\Delta^{\ell}\vee\Delta^{\frac{p}{2}}(\psi(\Delta))^p)}{\lambda}\nonumber\\
&+\frac{V(\xi(0))+K_1T+K_3D\Delta^{\ell}+c_p(K_2+K_3)\Delta^{1/2}\psi(\Delta)T}{V(1/k)\wedge V(k)}.
\end{align}
Therefore, we can select $k$ sufficiently large such that
\begin{equation}\label{epp1}
\frac{2V(\xi(0))+K_0T+K_1T}{V(k)\wedge V(1/k)}< \frac{\epsilon}{2}
\end{equation}
and select $\Delta$ so small such that
\begin{align}\label{epp2}
\frac{K_3D\Delta^{\ell}+c_p(K_2+K_3)\Delta^{1/2}\psi(\Delta)T}{V(1/k)\wedge V(k)}+\frac{K_8(\Delta^{\ell}\vee\Delta^{\frac{p}{2}}(\psi(\Delta))^p)}{\lambda}< \frac{\epsilon}{2}.
\end{align}
So by combining \eqref{epp1} and \eqref{epp2}, we now have 
\begin{equation}
\mathbb{P}\Big(\sup_{0\leq t \leq T}|x_{\Delta}(t)-x(t)|^p\ge \lambda\Big)< \epsilon,
\end{equation}
as desired. However, by Lemma \ref{sec5:eq:L1}, we also obtain \eqref{converge2} by setting $\Delta\rightarrow 0$.
\end{proof}
\subsection{Numerical simulation}
In this illustrative simulation example, we compare the performance of the truncated EM method (TEM) constructed for SDDE \eqref{eq2:1} with the backward EM method (BEM). We should clarify that, to the best of our knowledge, there exist no relevant literature for the numerical treatment of SDDE \eqref{eq2:1} based on the backward  EM method. This illustration is just for the purpose of comparison. For the sake of simplicity, let us consider the following form of SDDE \eqref{eq2:1} given by
\begin{equation}\label{eq:sm1}
 dx(t)=4(2-x(t)^2)dt+0.5x(t-2)^{2/3}x(t)^{3/5}dB(t),
\end{equation}
with the initial data $\xi(0)=0.2$, where  $\tau=2$, $\gamma=2$, $r=2/3$ and $\theta=3/5$. Clearly, we see that Assumption \ref{sec2:assump:1} is satisfied. Moreover, we note that
\begin{equation*}
\sup_{\vert x\vert\vee\vert y\vert \le u}\Big(|f(x)|\vee g(x,y)\Big)\le 6.5u^2
\end{equation*}
for all $u\ge 1$. This means that we have $z(u)=6.5u^2$ with inverse $z^{-1}(u)=(u/6.5)^{1/2}$. If we choose $\psi(\Delta)=\Delta^{-2/3}$, then 
\begin{align*}
z^{-1}(\psi(\Delta))=\Big(\frac{\Delta^{-2/3}}{6.5}\Big)^{1/2}.
\end{align*}  
In the following, we present numerical experiments illustrating stability, sample path behaviour, convergence and delay sensitivity of the TEM method.

\vspace{1em}

\begin{itemize}
\item 
\begin{figure}[!htbp]
  \centerline{\includegraphics[scale=0.8]{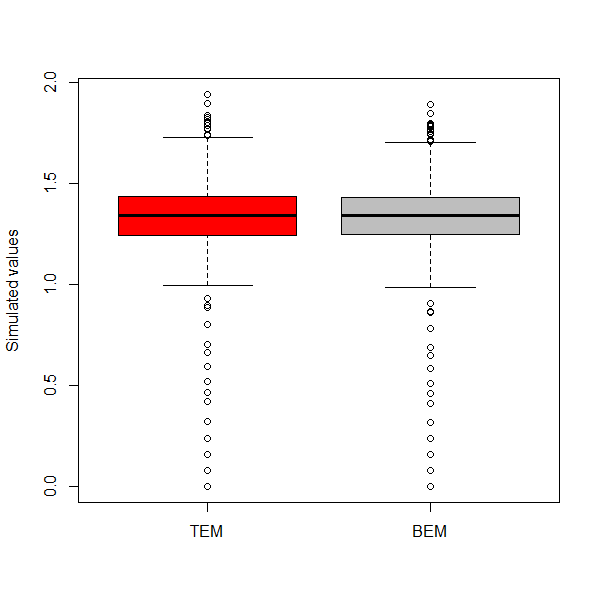}}
  \caption{Empirical distribution of the TEM and BEM solutions}
  \label{Fig:figure2}
\end{figure}

\vspace{1em}

\begin{table}[!htbp]
\centering
\begin{tabular}{lcccccc}
\hline
Method & Min & Mean & SD & Kurtosis & Skewness & Max \\
\hline 
TEM & 0.0000 & 1.3720 & 0.1880 & 14.5202 & -1.7707 & 1.9380 \\
BEM & 0.0000 & 1.3720 & 0.1825 & 12.9464 & -2.0372 & 1.9010 \\
\hline
\end{tabular}
\caption{Statistics of empirical distribution of the TEM and BEM solutions}
\label{Tab:table1}
\end{table}
Figure \ref{Fig:figure2} and Table \ref{Tab:table1} present the empirical distribution of the numerical solutions obtained from the TEM and BEM methods using $\Delta=10^{-2}$. It is observed that both methods yield identical mean values, indicating strong agreement in terms of first-order moments and providing evidence of weak convergence. The standard deviations are also close, suggesting that both schemes capture the variability of the solution accurately. Furthermore, both distributions exhibit negative skewness, indicating a tendency toward lower values, and high kurtosis, reflecting the presence of heavy tails. These features are consistent with the nonlinear and stochastic nature of the underlying model. Overall, the TEM method produces results that are statistically comparable to those of the BEM method, while maintaining stability and robustness.

\vspace{0.1em}

\item
\begin{figure}[!htbp]
  \centerline{\includegraphics[scale=0.8]{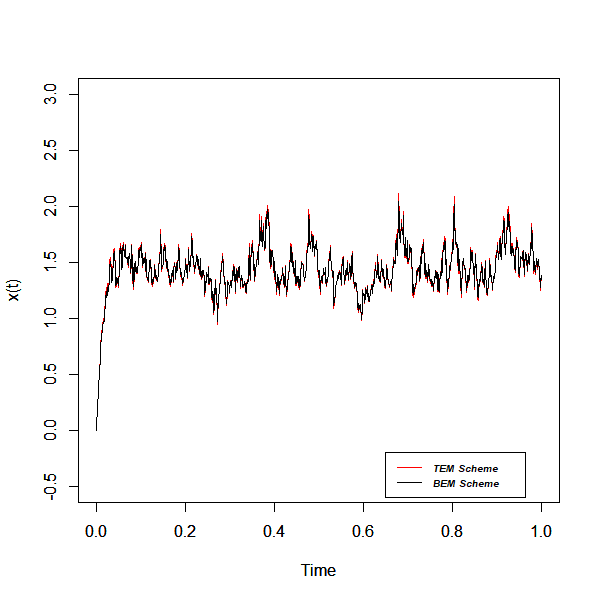}}
  \caption{Convergence of the TEM and BEM solutions}
  \label{Fig:figure1}
\end{figure}
Figure \ref{Fig:figure1} compares sample paths obtained using the TEM and BEM methods over the time interval $[0,1]$  using $\Delta=10^{-2}$. The two numerical solutions exhibit close agreement throughout the simulation, indicating that both methods approximate the same underlying SDDE \ref{eq:sm1}. Following a short initial transient, the solutions remain stable and fluctuate around a steady level without numerical divergence. The TEM solution displays slightly higher local variability, with marginally sharper peaks, reflecting its explicit nature despite the application of coefficient truncation. In contrast, the BEM solution appears smoother due to the implicit treatment of the drift term, which introduces additional numerical damping. However, the discrepancy between the two trajectories remains small across the entire time interval. Overall, the strong agreement between the TEM and BEM schemes demonstrates that truncation effectively stabilises the explicit Euler-Maruyama method, producing numerical behaviour comparable to that of the implicit BEM scheme. These results provide empirical support for the stability and convergence properties of the TEM method when applied to SDDEs with superlinear coefficients.

\vspace{0.1em}

\item
\begin{figure}[!htbp]
  \centerline{\includegraphics[scale=0.8]{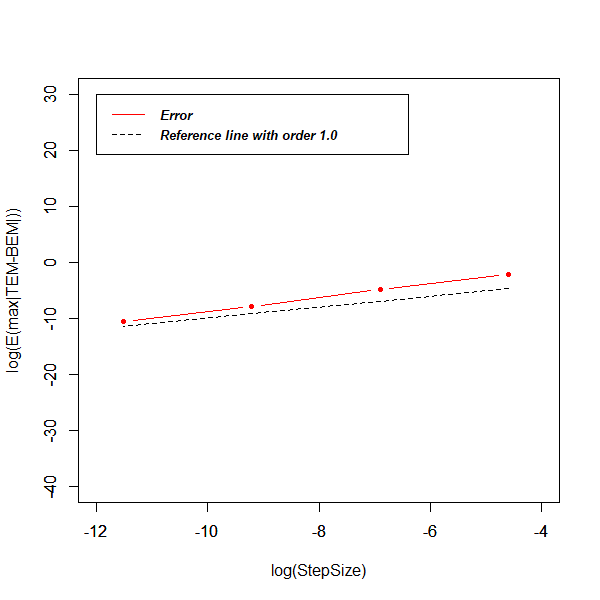}}
  \caption{Errors between the TEM and BEM solutions}
  \label{Fig:figure3}
\end{figure}

The log-log error plot in Figure \ref{Fig:figure3} illustrates the convergence behaviour of the TEM method  using the step sizes $10^{-2}$, $10^{-3}$, $10^{-4}$ and $10^{-5}$. The error is measured as the expected maximum difference between the TEM and BEM solutions. From the figure, the slope of the error curve is approximately equal to one, as it closely follows the reference line of order $1.0$. This indicates that the TEM method achieves first-order convergence with respect to the step size $\Delta$. This result confirms the theoretical convergence properties of the truncated EM scheme, even in the presence of delay and non-Lipschitz coefficients.

\newpage

\item
\begin{figure}[!htbp]
  \centerline{\includegraphics[scale=0.8]{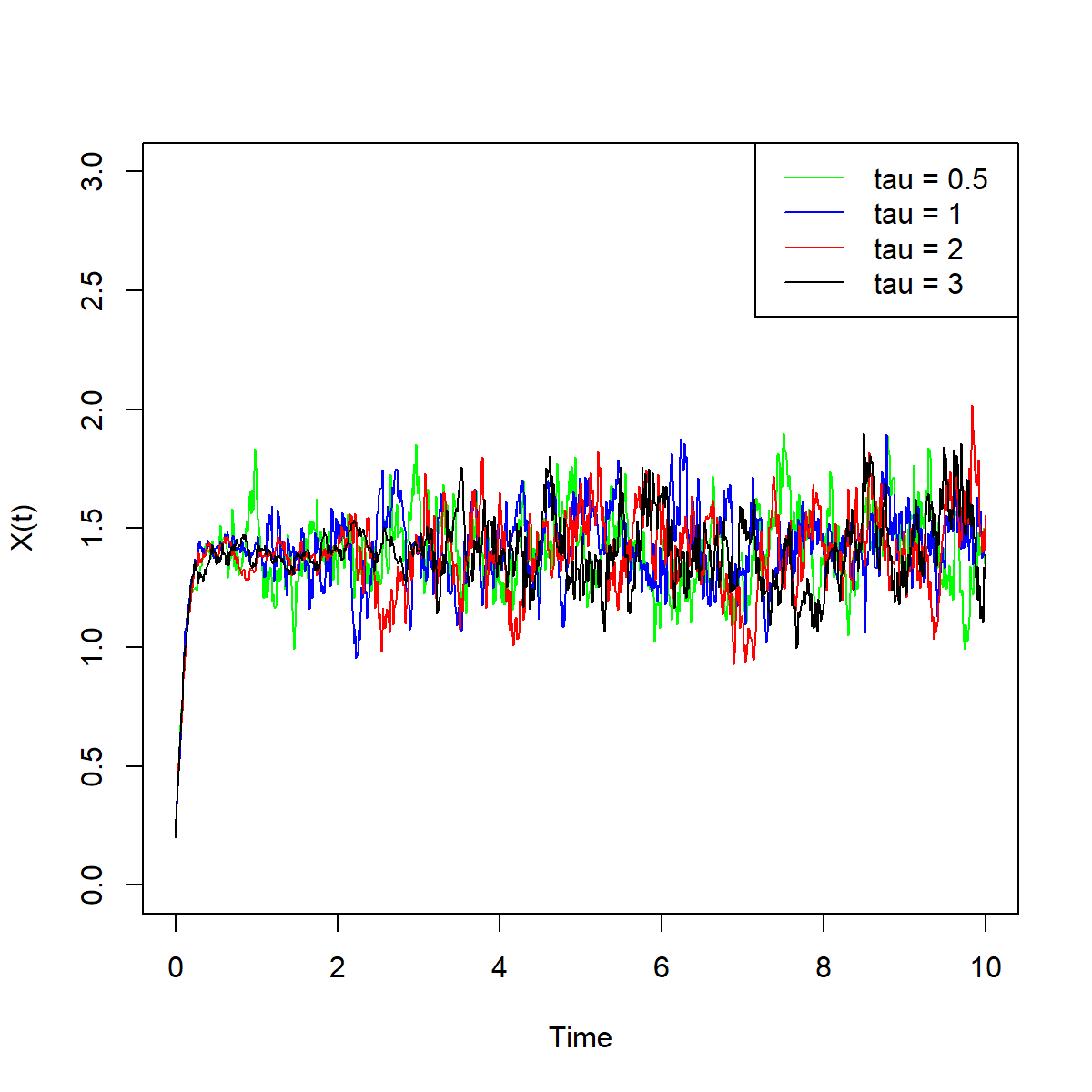}}
  \caption{Analysis of the delay sensitivity}
  \label{Fig:figure4}
\end{figure}
\noindent Figure \ref{Fig:figure4} illustrates the effect of varying the delay parameter $\tau$ (i.e, $\tau=0.5, 1, 2, 3$) on the numerical solution obtained via the TEM method. It is observed that all trajectories remain stable and bounded, confirming the robustness of the scheme in handling nonlinear and non-Lipschitz coefficients. Furthermore, the solutions exhibit only mild sensitivity to changes in $\tau$. While larger delay values introduce slightly increased variability due to stronger memory effects, the overall qualitative behaviour of the solution remains consistent. This indicates that, for the chosen parameter set, the system is relatively insensitive to delay variations within the considered range. These results highlight the effectiveness of the TEM method in preserving stability even in the presence of delay and super-linear diffusion terms.
\end{itemize}

\subsection{Application in finance}
We illustrate the convergence of the TEM method in a financial setting by considering bond pricing and path-dependent option valuation via Monte Carlo simulation.
\begin{lemma}
Let $x(t)$ and $\bar{x}_{\Delta}(t)$ denote the exact solution of \eqref{eq2:1} and the truncated EM approximation given in \eqref{eq:30}, respectively. Consider a zero-coupon bond with maturity $T$, whose price is given by
\begin{equation}\label{eq:52}
B(T) = \mathbb{E}\left[\exp\left(-\int_{0}^{T} x(t)\,dt\right)\right].
\end{equation}
A natural Monte Carlo approximation based on the numerical scheme is then defined by
\begin{equation}\label{eq:52*}
B_{\Delta}(T) = \mathbb{E}\left[\exp\left(-\int_{0}^{T} \bar{x}_{\Delta}(t)\,dt\right)\right].
\end{equation}
Then, as a consequence of Theorem \ref{final}, we have
\begin{equation*}
\lim_{\Delta \to 0} \left| B_{\Delta}(T) - B(T) \right| = 0.
\end{equation*}
\end{lemma}
\begin{proof}
Let $\epsilon, \delta\in (0,1)$ be arbitrarily small. It is sufficient to prove that 
\begin{align*}
\mathbb{P}\Big[\Big\vert \exp\Big(-\int_{0}^{T}x(t)dt\Big)- \exp\Big(-\int_{0}^{T}\bar{x}_{\Delta}(t)dt\Big) \Big\vert\ge \delta\Big]<\epsilon.
\end{align*}
Using the inequality $ \exp(-\vert x\vert)-\exp(-\vert y\vert)\le \vert x-y \vert$, we have
\begin{align*}
\Big\vert \exp\Big(-\int_{0}^{T}x(t)dt\Big)- \exp\Big(-\int_{0}^{T}\bar{x}_{\Delta}(t)dt\Big) \Big\vert&\le \Big\vert \int_{0}^{T}[x(t)-\bar{x}_{\Delta}(t)]dt \Big\vert\\
&\le T\sup_{0\le t\le T}\vert x(t)-\bar{x}_{\Delta}(t)\vert.
\end{align*}
By applying  Theorem \ref{final}, we obtain the desired assertion.
\end{proof}

\begin{lemma}
Let $x(t)$ and $\bar{x}_{\Delta}(t)$ denote the exact solution of \eqref{eq2:1} and its truncated Euler-Maruyama approximation given in \eqref{eq:30}, respectively. Consider a fixed-strike lookback put option with strike $K$, whose expected payoff is defined by
\begin{equation}\label{eq:52**}
P = \mathbb{E}\left[\left(K - \inf_{0 \le t \le T} x(t)\right)^+\right].
\end{equation}
A corresponding numerical approximation based on the truncated EM scheme is given by
\begin{equation}\label{eq:52***}
P_{\Delta} = \mathbb{E}\left[\left(K - \inf_{0 \le t \le T} \bar{x}_{\Delta}(t)\right)^+\right].
\end{equation}
Then, as a consequence of Theorem \ref{final}, we have
\begin{equation*}
\lim_{\Delta \to 0} |P - P_{\Delta}| = 0.
\end{equation*}
\end{lemma}

\begin{proof}
In other words, we need to prove that
\begin{align*}
\lim_{\Delta\rightarrow 0}\vert  (K-\inf_{0\le t\le T}x(t))^+-(K-\inf_{0\le t\le T}\vert \bar{x}_{\Delta}(t)\vert)^+\vert=0\quad \text{in probability}.
\end{align*}
This also means that the theorem holds as long as we can establish that for any small constants $\epsilon>0$ and $\delta\in (0,1)$
\begin{align}\label{look1}
\mathbb{P}(\vert (K-\inf_{0\le t\le T}x(t))^+-(K-\inf_{0\le t\le T}\vert \bar{x}_{\Delta}(t)\vert)^+\vert\ge \delta)<\epsilon
\end{align}
holds for all sufficiently small $\Delta$. We observe that
\begin{align}
\vert  (K-\inf_{0\le t\le T}x(t))^+-(K-\inf_{0\le t\le T}\vert \bar{x}_{\Delta}(t)\vert)^+\vert
&\le \vert \inf_{0\le t\le T}x(t)-\inf_{0\le t\le T}\vert \bar{x}_{\Delta}(t)\vert\vert\nonumber\\
&\le \sup_{0\le t\le T}\vert x(t)-\vert \bar{x}_{\Delta}(t)\vert\vert\nonumber\\
&\le \sup_{0\le t\le T}\vert x(t)-\bar{x}_{\Delta}(t)\vert.
\end{align}
Then, it follows that 
\begin{align}\label{look2}
\mathbb{P}(\vert (K-\inf_{0\le t\le T}x(t))^+-(K-\inf_{0\le t\le T}\vert \bar{x}_{\Delta}(t)\vert)^+\vert\ge \delta)
\le \mathbb{P}(\sup_{0\le t\le T}\vert x(t)-\bar{x}_{\Delta}(t)\vert\ge \delta).
\end{align}
So, by Theorem \ref{final}, we now have
\begin{align}\label{look3}
\mathbb{P}(\sup_{0\le t\le T}\vert x(t)-\bar{x}_{\Delta}(t)\vert\ge \delta)<\epsilon
\end{align}
for all sufficiently small $\Delta$. So by combining \eqref{look2} and \eqref{look3} gives us \eqref{look1}.
\end{proof}

\begin{example}
We investigate the numerical performance of the TEM method in financial applications. In particular, we consider two derivative products: a zero-coupon bond and a fixed-strike lookback put option, both governed by the SDDE \eqref{eq:sm1}. The numerical experiments are performed with maturity time $T = 5$ and strike price $K = 1$. We implement the TEM scheme using decreasing step sizes $\Delta = 10^{-2}, 10^{-3}, 10^{-4}, 10^{-5}$ in order to examine convergence behaviour. For each configuration, Monte Carlo simulations with $MC = 2000$ sample paths are used to approximate the expected zero-coupon bond and fixed-strike lookback put option values.

\begin{itemize}
\item Zero-coupon bond pricing: The price of a zero-coupon bond with maturity $T$ is given by \eqref{eq:52}, where $x(t)$ represents the short rate process governed by the SDDE \eqref{eq:sm1}. Using the TEM approximation $\bar{x}_{\Delta}(t)$, the numerical bond price is given by \eqref{eq:52*}. The integral is approximated using a Riemann sum becomes
\begin{align*}
\int_0^T \bar{x}_{\Delta}(t)\,dt \approx \sum_{k=0}^{N-1} X_{\Delta}(t_k)\Delta.
\end{align*}
\item Lookback put option pricing: Consider a fixed-strike lookback put option with strike $K$. Its payoff is defined by \eqref{eq:52**}. The corresponding numerical approximation is given by \eqref{eq:52***}.
\item Monte Carlo implementation: For each step size $\Delta$, we simulate $M$ independent sample paths of the TEM scheme by
\begin{align*}
\{X_{\Delta}^{(i)}(t_k)\}_{i=1}^M.
\end{align*}
The Monte Carlo estimators for \eqref{eq:52*} and \eqref{eq:52***} are given by
\begin{align*}
B_{\Delta}^{MC}(T) &= \frac{1}{M} \sum_{i=1}^M 
\exp\left(-\sum_{k=0}^{N-1} X_{\Delta}^{(i)}(t_k)\Delta\right), \\
P_{\Delta}^{MC} &= \frac{1}{M} \sum_{i=1}^M 
\left(K - \min_{0 \le k \le N} X_{\Delta}^{(i)}(t_k)\right)^+.
\end{align*}
\item Numerical results: The Monte Carlo estimates for the bond price and the fixed-strike lookback put option are reported in Table \ref{Tab:table5} for different step sizes $\Delta$. From the table, it is observed that the bond price estimates remain stable across all step sizes. In particular, the values fluctuate only slightly around $0.001$, indicating strong numerical consistency and supporting the convergence of the truncated Euler-Maruyama method. In contrast, the lookback put option exhibits a different behaviour. For moderate step sizes $\Delta = 10^{-2}$ and $10^{-3}$, the estimated values remain stable around $0.8$. However, as the step size decreases further, slight variations are observed, and a noticeable increase occurs at $\Delta = 10^{-5}$. This behaviour can be attributed to the path-dependent nature of the lookback payoff, which depends on the infimum of the process. As $\Delta$ decreases, the time discretisation becomes finer, increasing the likelihood of capturing lower values of the trajectory. Consequently, the numerical approximation of the infimum becomes more sensitive, leading to fluctuations in the estimated payoff. Moreover, the observed variation at very small step sizes is influenced by Monte Carlo sampling error. Since the payoff depends on extreme values of the sample paths, a larger number of simulations is typically required to obtain stable estimates. Overall, the results confirm that the truncated Euler-Maruyama method provides stable and reliable approximations for bond pricing, while the lookback option highlights the increased sensitivity associated with path-dependent financial derivatives.

\vspace{0.1em}

\begin{table}[!htbp]
\centering
\begin{tabular}{ccc}
\hline
$\Delta$ & Bond Price $B_{\Delta}(T)$ & Lookback Put $P_{\Delta}$ \\
\hline
$10^{-2}$ & 0.0009993 & 0.8000000 \\
$10^{-3}$ & 0.0010098 & 0.8000000 \\
$10^{-4}$ & 0.0010066 & 0.8000462 \\
$10^{-5}$ & 0.0010109 & 1.0000000 \\
\hline
\end{tabular}
\caption{Monte Carlo estimates for bond price and lookback put option}
\label{Tab:table5}
\end{table}
\end{itemize}
\end{example}

\section*{Acknowledgement}
The author would like to acknowledge the financial support from the Heilbronn Institute for Mathematical Research (HIMR) and the UKRI/EPSRC Additional Funding Programme for Mathematical Sciences. 

\section*{Declarations}
\textbf{Conflict of interest:} The author declares that he has no conflict of interest.

\end{document}